\newtheorem{theorem}{Theorem}
\newtheorem{corollary}[theorem]{Corollary}
\newtheorem{lemma}[theorem]{Lemma}
\theoremstyle{definition}
\newtheorem{exam}{Example}
\theoremstyle{remark}
\newtheorem{rem}{Remark}
\numberwithin{equation}{section}
\numberwithin{theorem}{section}
\numberwithin{defn}{section}
\newcommand{\abs}[1]{\left\vert#1\right\vert}
\newcommand{\diag}{\mathrm{diag}}
\begin{document}
\title[Modularity of Some Nahm sums as Vector-valued Functions]
 {Modularity of Some Nahm sums as Vector-valued Functions}

\author{Liuquan Wang and Huohong Zhang}
\address[L. Wang]{School of Mathematics and Statistics, Wuhan University, Wuhan 430072, Hubei, People's Republic of China}
\email{wanglq@whu.edu.cn;mathlqwang@163.com}

\address[H. Zhang]{School of Mathematics and Statistics, Wuhan University, Wuhan 430072, Hubei, People's Republic of China}
\email{hhzhang00@whu.edu.cn}

\dedicatory{Dedicated to George Andrews and Bruce Berndt for their 85th birthdays}

\subjclass[2010]{11F03, 11P84, 33D15, 33D60}

\keywords{Nahm's problem; Rogers--Ramanujan type identities; vector-valued modular functions; Andrews--Gordon identities}

\begin{abstract}
Zagier observed that modular Nahm sums associated with the same matrix may form a vector-valued modular function on some congruence subgroup. We establish modular transformation formulas for several families of Nahm sums by viewing them as vector-valued functions, and thereby we show that they are indeed modular on the congruence subgroup $\Gamma_0(N)$ with $N=1,2,3,4$.  In particular, we prove two transformation formulas discovered by Mizuno  related to the Kanade--Russell mod 9 conjecture and Capparelli's identities. We also establish vector-valued transformation formulas for some theta series. As applications, we give modular transformation formulas for various families of Nahm sums involving those in the Andrews--Gordon identities and Bressoud's identities.
\end{abstract}

\maketitle

\section{Introduction and Main Results}\label{sec-intro}

The famous Rogers--Ramanujan identities, first proved by Rogers \cite{R-R-Rogers} and later rediscovered by Ramanujan, state that
\begin{align}
G(q):=\sum_{n=0}^{\infty} \frac{q^{n^2}}{(q;q)_n} = \frac{1}{(q,q^4;q^5)_\infty}, \label{R-R-1} \\
H(q):=\sum_{n=0}^{\infty} \frac{q^{n^2+n}}{(q;q)_n} = \frac{1}{(q^2,q^3;q^5)_\infty}. \label{R-R-2}
\end{align}
Here and throughout this paper,  we assume that $\abs{q} < 1$ and use standard $q$-series notations:
\begin{align}
(a;q)_0:=1, \quad (a;q)_n:=\prod_{k=0}^{n-1}(1-aq^k), \quad (a;q)_\infty :=\prod_{k=0}^{\infty}(1-aq^k),  \\
 (a_1,\dots,a_m;q)_n:=(a_1;q)_n\cdots (a_m;q)_n ,~ n\in \mathbb{N}\cup \{ \infty\}.
\end{align}

The Andrews--Gordon identity \cite{Andrews1974} generalizes the above identities to arbitrary odd moduli. It asserts that for any integer $k\geq 2$ and $1\leq i\leq k$,
\begin{align}\label{eq-AG}
x_{2k+1,i}(q)&:=\sum_{n_{1},\dots,n_{k-1}\geq 0}\frac{q^{N_{1}^2+\cdots+N_{k-1}^2 +N_{i}+\cdots+N_{k-1}}}{(q;q)_{n_{1}} \cdots (q;q)_{n_{k-2}} (q;q)_{n_{k-1}}} \nonumber \\
&= \frac{(q^{i},q^{2k+1-i},q^{2k+1};q^{2k+1})_\infty}{(q;q)_\infty},
\end{align}
where $N_j = n_j+\cdots+n_{k-1}$ if $j\leq k-1$ and $N_k = 0$. As for the even moduli case, Bressoud's identity \cite[Eq.\ (3.4)]{Bressoud1980} states that for any integer $k\geq 2$ and $1\leq i\leq k$,
\begin{align}
x_{2k,i}(q)&:=\sum_{n_{1},\dots,n_{k-1}\geq 0}\frac{q^{N_{1}^2+\cdots+N_{k-1}^2 +N_{i}+\cdots+N_{k-1}}}{(q;q)_{n_{1}} \cdots (q;q)_{n_{k-2}} (q^2;q^2)_{n_{k-1}}} \nonumber \\
&= \frac{(q^{i},q^{2k-i},q^{2k};q^{2k})_\infty}{(q;q)_\infty},  \label{B-identity}
\end{align}
where $N_j$ is defined as before.

Now there are many sum-to-product identities similar to \eqref{R-R-1}--\eqref{R-R-2} and \eqref{eq-AG}--\eqref{B-identity} and they were called as Rogers--Ramanujan type identities. Surprisingly, there are deep connections between these identities and various branches of mathematics such as number theory, combinatorics and Lie algebras and they are also closely related to mathematical physics such as rational conformal field theories.

An important problem linking the theory of $q$-series and modular forms is to understand the modularity of $q$-hypergeometric series. Rogers--Ramanujan type identities serve as an useful tool to this problem. These identities turn $q$-hypergeometric series to some nice infinite products which reveal the modularity of the original series. For example, the infinite products on the right sides of \eqref{R-R-1} and \eqref{R-R-2} are reciprocals of the generalized Dedekind eta functions and hence are modular. This fact is not easy to be captured  from the left sides.

In the direction of studying modularity of $q$-series, Nahm considered the following particular important class of series motivated from physics:
\begin{equation}
f_{A,B,C}(q) := \sum_{n=(n_1,\dots,n_r)^{\mathrm{T}}\in (\mathbb{Z}_{\geq 0})^{r}}\frac{q^{\frac{1}{2} n^{\rm T}An+n^{\rm T}B+C}}{(q;q)_{n_{1}}\cdots (q;q)_{n_r}},  \label{Nahm-def}
\end{equation}
where $r\geq 1$ is a positive integer, $A$ is a real positive definite symmetric $r\times r$ matrix, $B$ is a column vector of length $r$, and $C$ is a scalar. The series $f_{A,B,C}(q)$ is called a \emph{Nahm sum} or \emph{Nahm series}. Nahm \cite{Nahm-question} proposed the following famous problem: find all such $(A, B, C)$ with rational entries for which the series $f_{A,B,C}(q)$ becomes modular, and such $(A,B,C)$ is said to be a rank $r$ \emph{modular triple}. These modular Nahm sums are expected to be characters of some rational conformal field theories. For example, the Rogers--Ramanujan identities \eqref{R-R-1} and \eqref{R-R-2} show that  $(2,0,-1/60)$ and $(2,1,11/60)$ are two rank one modular triples.

In 2007, Zagier \cite{Zagier2007} made important progress towards Nahm's problem. He proved that there are exactly seven modular triples when the rank $r=1$. When the rank $r=2$ and $3$, Zagier \cite{Zagier2007} found many possible modular triples. After the works  of Zagier \cite{Zagier2007}, Vlasenko--Zwegers \cite{VZ}, Cherednik--Feigin \cite{Feigin}, Wang \cite{Wang2022,W-rank3} and Cao--Rosengren--Wang \cite{C-R-W}, the modularity of all of Zagier's rank two and rank three examples have been confirmed.
Zagier \cite{Zagier2007} also stated explicitly a conjecture attributed to Nahm \cite{Nahm-conj}, which provides sufficient and necessary conditions about the matrix $A$ so that it is the matrix part of some modular triples.

In 2023, Mizuno \cite{Mizuno} considered the modularity of the generalized  Nahm sums defined by
\begin{align}\label{eq-general-Nahm-sum}
   \widetilde{f}_{A,B,C,D}(q):= \sum_{n=(n_1,\dots,n_r)^\mathrm{T} \in (\mathbb{Z}_{\geq 0})^r} \frac{q^{\frac{1}{2}n^\mathrm{T}ADn+n^\mathrm{T}B+C}}{(q^{d_1};q^{d_1})_{n_1}\cdots (q^{d_r};q^{d_r})_{n_r}}.
\end{align}
Here $D=\mathrm{diag}(d_1,\dots,d_r)$ ($d_1,\dots,d_r \in \mathbb{Z}_{>0}$),  $A \in \mathbb{Q}^{r \times r} $ is a symmetrizable matrix with the symmetrizer $D$, i.e., $AD$ is symmetric positive definite, $B \in \mathbb{Q}^r$ is a vector and  $C \in \mathbb{Q}$ is a scalar. Such generalized Nahm sums appeared frequently in sum-sides of Rogers--Ramanujan type identities. One instance is Bressoud's identities \eqref{B-identity}. Another famous example is Capparelli's partition identities \cite{Capparelli}:
\begin{align}
a_1(q)&:=\sum_{n_1,n_2\geq 0} \frac{q^{2n_1^2+6n_1n_2+6n_2^2}}{(q;q)_{n_1}(q^3;q^3)_{n_2}}=(-q^2,-q^3,-q^4,-q^6;q^6)_\infty, \label{eq-Capparelli-1} \\
a_2(q)&:=\sum_{n_1,n_2\geq 0} \frac{q^{2n_1^2+6n_1n_2+6n_2^2+n_1+3n_2}(1+q^{2n_1+3n_2+1})}{(q;q)_{n_1}(q^3;q^3)_{n_2}} \nonumber \\
&=(-q,-q^3,-q^5,-q^6;q^6)_\infty. \label{eq-Capparelli-2}
\end{align}
They were discovered by the theory of affine Lie algebras and initially stated as partition identities. The above Nahm series representations of these identities were found independently by Kanade--Russell \cite{KR2019} and Kur\c{s}ung\"{o}z \cite{Kursungoz}. As before, when $\widetilde{f}_{A,B,C,D}(q)$ is modular we call $(A,B,C,D)$ a rank $r$ \emph{modular quadruple}.

Mizuno \cite{Mizuno} provided $14$ sets of possible rank two modular quadruples, and he also proposed $19$ and $15$ sets of possible rank three modular quadruples with symmetrizers $\diag (1, 1, 2)$ and $\diag (1, 2, 2)$ respectively. Besides two rank two examples, the modularity of all other examples have been confirmed in a series of works by Wang--Wang \cite{WWg2,WW112,WW122}.  One of the remaining unsolved examples corresponds to the Kanade--Russell mod $9$ conjecture \cite{K-R-conj} (in the form by Kur\c{s}ung\"oz\cite{Kursungoz-2}):
\begin{align}
b_1(q) := \sum_{n_1,n_2\geq 0} \frac{q^{n_1^2+3n_1n_2+3n_2^2}}{(q;q)_{n_1}(q^3;q^3)_{n_2}} = \frac{1}{(q,q^3,q^6,q^8;q^9)_\infty},  \label{KR-a1} \\
b_2(q) := \sum_{n_1,n_2\geq 0} \frac{q^{n_1^2+3n_1n_2+3n_2^2+n_1+3n_2}}{(q;q)_{n_1}(q^3;q^3)_{n_2}} = \frac{1}{(q^2,q^3,q^6,q^7;q^9)_\infty},  \label{KR-a2} \\
b_3(q) := \sum_{n_1,n_2\geq 0} \frac{q^{n_1^2+3n_1n_2+3n_2^2+2n_1+3n_2}}{(q;q)_{n_1}(q^3;q^3)_{n_2}} = \frac{1}{(q^3,q^4,q^5,q^6;q^9)_\infty}. \label{KR-a3}
\end{align}

As predicated by physics, Zagier \cite{Zagier2007} made the following important observation. If the matrix $A$ satisfies the condition stated in Nahm's conjecture,  then the collection of modular Nahm sums associated with $A$ span a vector space which is invariant under the action of $\mathrm{SL}(2,\mathbb{Z})$ (bosonic case) or at least $\Gamma(2)$ (fermionic case). Here we recall the full modular group
\begin{align}
\mathrm{SL}(2,\mathbb{Z}):=\left\{\begin{pmatrix} a & b \\ c & d \end{pmatrix}: ad-bc=1,a,b,c,d\in \mathbb{Z}\right\}
\end{align}
and some of its congruence subgroups:
\begin{align}
\Gamma(N):=\left\{\gamma \in \mathrm{SL}(2,\mathbb{Z}):\gamma \equiv \begin{pmatrix} 1 & 0 \\ 0 & 1 \end{pmatrix} \pmod{N} \right\}, \\
\Gamma_0(N):=\left\{\gamma \in \mathrm{SL}(2,\mathbb{Z}):\gamma \equiv \begin{pmatrix} * & * \\ 0 & * \end{pmatrix} \pmod{N} \right\}.
\end{align}
Zagier's observation is also motivated from asymptotic analysis as discussed in \cite[Section B]{Zagier2007}.

A typical example supporting the above observation is given by the Rogers--Ramanujan identities. Let $q=e^{2\pi i \tau}$ ($\mathrm{Im}~ \tau>0$) and $\zeta_N=e^{2\pi i/N}$ throughout this paper. If we combine the functions in \eqref{R-R-1} and \eqref{R-R-2} into a single vector-valued function
\begin{align}\label{RR-vector}
    X(\tau):=(q^{-1/60}G(q),q^{11/60}H(q))^\mathrm{T},
\end{align}
then we have transformation formulas with respect to the full modular group:
\begin{align}
    X(\tau+1)=\begin{pmatrix}
        \zeta_{60}^{-1} & 0 \\ 0 & \zeta_{60}^{11}
    \end{pmatrix} X(\tau), \quad X\left(-\frac{1}{\tau}\right)=\frac{2}{\sqrt{5}}\begin{pmatrix}
        \sin \frac{2\pi}{5} & \sin \frac{\pi}{5} \\
        \sin\frac{\pi}{5} & -\sin \frac{2\pi}{5}     \end{pmatrix}X(\tau).
\end{align}
In a recent work, Milas--Wang \cite{Milas-Wang} established similar transformation formulas for a vector-valued function consisting of generalized tadpole Nahm sums.

Mizuno \cite[Eq.\ (9)]{Mizuno} discovered the following transformation formulas without proof.
\begin{theorem}\label{thm-M-conj}
Let $b_i(q)$ ($i=1,2,3$) be defined as in \eqref{KR-a1}--\eqref{KR-a3} and
$$X(\tau) := (q^{-1/18}b_1(q),q^{5/18}b_2(q),q^{11/18}b_3(q))^\mathrm{T}.$$
We have
\begin{align}
X(\tau+1) &= \mathrm{diag}(\zeta_{18}^{-1},\zeta_{18}^{5},\zeta_{18}^{11})
X(\tau), \label{X-KR-T} \\
X\Big(-\frac{1}{\tau}\Big) &= \begin{pmatrix}
\alpha_1 & \alpha_2 & \alpha_4 \\
\alpha_2 & -\alpha_4 & -\alpha_1 \\
\alpha_4 & -\alpha_1 & \alpha_2
\end{pmatrix}
X\Big(\frac{\tau}{3}\Big) \label{goal}
\end{align}
where $\alpha_k = \frac{1}{2\sqrt{3}\sin \frac{k\pi}{9}}$. As a consequence, $X(\tau)$ is a vector-valued modular function on $\Gamma_0(3)$.
\end{theorem}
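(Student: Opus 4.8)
The plan is to first convert the three Nahm sums into infinite products using the established identities \eqref{KR-a1}--\eqref{KR-a3}, so that the transformation problem becomes one about (generalized) Dedekind eta functions. With $q=e^{2\pi i\tau}$, introduce the generalized Dedekind eta functions $\eta_{9,g}(\tau)=q^{\frac{9}{2}B_2(g/9)}(q^{g},q^{9-g};q^9)_\infty$, where $B_2(x)=x^2-x+\tfrac16$. A short bookkeeping of the prefactors shows that the exponents $-1/18,\,5/18,\,11/18$ are exactly those that turn each component into a clean weight-zero eta quotient, namely $q^{-1/18}b_1=1/(\eta_{9,1}\eta_{9,3})$, $q^{5/18}b_2=1/(\eta_{9,2}\eta_{9,3})$ and $q^{11/18}b_3=1/(\eta_{9,4}\eta_{9,3})$. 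Thus $X(\tau)=\bigl(1/(\eta_{9,1}\eta_{9,3}),\,1/(\eta_{9,2}\eta_{9,3}),\,1/(\eta_{9,4}\eta_{9,3})\bigr)^{\mathrm T}$, a vector of weight-zero functions sharing the common factor $1/\eta_{9,3}$, and one checks the reduction $\eta_{9,3}(\tau)=\eta(3\tau)/\eta(9\tau)$.

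The translation formula \eqref{X-KR-T} is then immediate: from $\eta_{9,g}(\tau+1)=e^{\pi i\,9B_2(g/9)}\eta_{9,g}(\tau)$ one reads off the diagonal multipliers, and the values $9B_2(1/9)$, $9B_2(2/9)$, $9B_2(4/9)$ together with $9B_2(1/3)=-\tfrac12$ combine on the three components to give precisely $\zeta_{18}^{-1},\zeta_{18}^{5},\zeta_{18}^{11}$.

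The substance of the theorem is \eqref{goal}, which I would attack through its equivalent Fricke form: replacing $\tau$ by $3\tau$ turns \eqref{goal} into the involution statement $X(-1/(3\tau))=M X(\tau)$, where $M$ is the displayed $\sin$-matrix and $W_3:\tau\mapsto-1/(3\tau)$ is the natural object since $W_3^2=\mathrm{id}$. To prove it I would pass to theta functions: by the Jacobi triple product $(q^{a},q^{9-a},q^9;q^9)_\infty=\sum_{n}(-1)^nq^{(9n^2+(2a-9)n)/2}$, so each $\eta_{9,g}$ is (up to a $q$-power) a ratio of a genuine theta series and $(q^9;q^9)_\infty=q^{-3/8}\eta(9\tau)$. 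Applying Poisson summation (the Jacobi theta transformation) to the theta numerators under $\tau\mapsto-1/\tau$, while simultaneously transforming the common Dedekind-eta denominator $\eta_{9,3}=\eta(3\tau)/\eta(9\tau)$, the half-integral-weight automorphy factors $\sqrt{-i\tau}$ and the Gaussian $q^{\ast/\tau}$ prefactors cancel between numerator and denominator — as they must, since each $X_j$ has weight zero — and the resulting dual sums regroup, according to residues modulo $18$, into the $q^{1/3}$-theta series occurring in $X(\tau/3)$. The finite root-of-unity/Gauss sums that weight this regrouping collapse to $\alpha_k=1/(2\sqrt3\,\sin(k\pi/9))$. \emph{This simultaneous transformation-and-cancellation, together with the Gauss-sum evaluation producing the closed form $\sin(k\pi/9)$ with the precise constant $1/(2\sqrt3)$, is the step I expect to require the most care}, since the $\tau$-dependent pieces only cancel when numerator and denominator are transformed in tandem rather than term by term.

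Finally, for the modularity statement I would combine the Fricke form $X(-1/(3\tau))=M X(\tau)$ with \eqref{X-KR-T}. A direct computation gives $W_3TW_3^{-1}=\begin{pmatrix}1&0\\-3&1\end{pmatrix}\in\Gamma_0(3)$, and since $\Gamma_0(3)$ is generated (modulo $\pm I$) by $T=\begin{pmatrix}1&1\\0&1\end{pmatrix}$ and $\begin{pmatrix}1&0\\-3&1\end{pmatrix}$, conjugating the known $T$-action by the $W_3$-relation exhibits $X(\gamma\tau)=\rho(\gamma)X(\tau)$ with constant matrices $\rho(\gamma)$ for each generator $\gamma$, hence for all of $\Gamma_0(3)$ (the weight being zero, no automorphy factor intervenes). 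The only compatibility checks needed are $M^2=I$, which is exactly the content of $W_3^2=\mathrm{id}$, and that the two generator matrices satisfy the defining relations of $\Gamma_0(3)$; both reduce to finite trigonometric identities among the $\alpha_k$. This exhibits $X(\tau)$ as a vector-valued modular function on $\Gamma_0(3)$.
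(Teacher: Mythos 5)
Your setup is sound and agrees with the paper's: writing the components as reciprocals of products of generalized Dedekind eta functions (your $q$-power bookkeeping for $-1/18,5/18,11/18$ is correct), reading off \eqref{X-KR-T} from the translation law, and deducing $\Gamma_0(3)$-modularity from the $T$- and Fricke-type transformations via the generators of $\Gamma_0(3)$ (this is the paper's Lemma \ref{lem-action}). The gap is in the one step you yourself flag as delicate, and it is not merely a matter of care: the mechanism you describe cannot produce \eqref{goal}. After applying the theta transformation (equivalently, Yang's Lemma \ref{Geta-tran}) to the numerators of the $\eta_{9,g}$, each component of $X(-1/(9\tau))$ becomes a constant times the \emph{reciprocal} of an infinite product twisted by ninth roots of unity, of the shape $1/\big((\zeta_9^{\pm j}q;q)_\infty(\zeta_3^{\pm1}q;q)_\infty\big)$. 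There are no ``dual sums'' left at this stage to ``regroup according to residues modulo $18$'': the theta series sit in the denominators of the $X_j$, and the reciprocal of a (single, twisted) theta product is not a linear combination of theta series, so no Gauss-sum/Poisson argument can redistribute it over the components of $X(\tau/3)$.

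What is actually needed to finish is a genuine $q$-series identity: the $3$-dissection of the crank generating function $F(z;q)=(q;q)_\infty/(zq,q/z;q)_\infty$ at primitive ninth roots of unity, namely Garvan's formula \eqref{F-3-dissection}, which expresses $F(\zeta_9^j;q)$ as an explicit combination of products in $q^3$ matching the three components $X_i(3\tau)$. The constants $\alpha_k=1/(2\sqrt3\sin(k\pi/9))$ then come from evaluating the elementary root-of-unity prefactors multiplying each dissection term (equations \eqref{a1-value}--\eqref{a3-value}), not from a Gauss sum. Without \eqref{F-3-dissection} (or an equivalent input such as the Macdonald identity of type $A_2$, which Mizuno's independent proof uses), the transformation \eqref{goal} does not follow; so you should either import Garvan's dissection or prove an equivalent theta identity before the ``regrouping'' step can be carried out.
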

Mizuno \cite[Eqs.\ (45), (54)]{Mizuno} also conjectured two vector-valued transformation formulas between rank three Nahm sums, and they have now been confirmed by Wang--Wang \cite{WW112,WW122}. However, the method in \cite{WW112,WW122} does not work for proving \eqref{goal}. The first goal of this paper is to give a proof for it. Interestingly, our proof will employ Garvan's 3-dissection formulas \cite{Garvan} for the generating function of partition cranks.

Mizuno \footnote{Private communication via email dated on August 30, 2023.}  informed the first author that he conjectured a formula for the Nahm sums involved in Capparelli's identities \eqref{eq-Capparelli-1}--\eqref{eq-Capparelli-2}. The second goal of this paper is to present a proof for his conjecture which we state as
\begin{theorem}\label{thm-M-Capparelli-conj}
Let $X(\tau)=(q^{-1/24}a_1(q),q^{5/24}a_2(q))^\mathrm{T}$ where $a_1(q)$ and $a_2(q)$ are defined in \eqref{eq-Capparelli-1} and \eqref{eq-Capparelli-2}. We have
\begin{align}
X(\tau+1)&=\diag(\zeta_{24}^{-1},\zeta_{24}^5)X(\tau), \label{X-Capparelli-T} \\
X\left(-\frac{1}{\tau}\right)&=\frac{1}{\sqrt{2}}\begin{pmatrix} 1 & 1 \\ 1 & -1 \end{pmatrix} X\left(\frac{\tau}{3}\right). \label{X-Capparelli-S}
\end{align}
As a consequence, $X(\tau)$ is a vector-valued modular function on $\Gamma_0(3)$.
\end{theorem}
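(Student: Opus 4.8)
The plan is to first convert the two Nahm sums into $\theta/\eta$ quotients using Capparelli's product evaluations \eqref{eq-Capparelli-1}--\eqref{eq-Capparelli-2}, and then read off both transformation formulas from the classical behaviour of $\eta$ and of one-dimensional theta series. Set $\theta_a(\tau):=\sum_{m\equiv a\,(6)}q^{m^2/12}$, so that $\theta_a=\theta_{-a}=\theta_{6-a}$. Factoring $(-q^3;q^3)_\infty=(-q^3;q^6)_\infty(-q^6;q^6)_\infty$ out of both $a_1$ and $a_2$, applying the Jacobi triple product to the remaining pairs $(-q^2;q^6)_\infty(-q^4;q^6)_\infty$ and $(-q;q^6)_\infty(-q^5;q^6)_\infty$ (which equal $(q^6;q^6)_\infty^{-1}\sum_n q^{3n^2-n}$ and $(q^6;q^6)_\infty^{-1}\sum_n q^{3n^2-2n}$), completing the square, and using $(-q^3;q^3)_\infty/(q^6;q^6)_\infty=1/(q^3;q^3)_\infty$, I expect the compact shapes
\[
q^{-1/24}a_1(q)=\frac{\theta_1(\tau)}{\eta(3\tau)},\qquad q^{5/24}a_2(q)=\frac{\theta_2(\tau)}{\eta(3\tau)} .
\]
(Equivalently these are the weight-zero eta quotients $\eta(4\tau)\eta(6\tau)^2/(\eta(2\tau)\eta(3\tau)\eta(12\tau))$ and $\eta(2\tau)^2\eta(12\tau)/(\eta(\tau)\eta(4\tau)\eta(6\tau))$, which already exhibits each component as holomorphic on $\mathbb{H}$ and meromorphic at the cusps.) Granting this, \eqref{X-Capparelli-T} is immediate: the infinite products are power series in $q$ with integer exponents, hence unchanged under $\tau\mapsto\tau+1$, while the prefactors $q^{-1/24},q^{5/24}$ contribute $\zeta_{24}^{-1},\zeta_{24}^{5}$.

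For \eqref{X-Capparelli-S} I would combine two classical transformations. Poisson summation (Jacobi's imaginary transformation) gives the finite ``discrete Fourier'' law $\theta_a(-1/\tau)=\sqrt{\tau/(6i)}\,\sum_{r=0}^{5}\zeta_6^{\,ra}\theta_r(\tau)$, while $\eta(-3/\tau)=\eta(-1/(\tau/3))=\sqrt{-i\tau/3}\,\eta(\tau/3)$; upon dividing, the $\tau$- and $i$-factors collapse to exactly $1/\sqrt2$. Using $\theta_4=\theta_2$, $\theta_5=\theta_1$ together with $\zeta_6+\zeta_6^{-1}=1$, $\zeta_6^2+\zeta_6^{-2}=-1$ and $\zeta_6^3=-1$, the two relevant Fourier combinations simplify to $\sum_r\zeta_6^{\,r}\theta_r=\theta_0+\theta_1-\theta_2-\theta_3$ and $\sum_r\zeta_6^{\,2r}\theta_r=\theta_0-\theta_1-\theta_2+\theta_3$. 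Substituting these into $X_j(-1/\tau)=\theta_{c_j}(-1/\tau)/\eta(-3/\tau)$ (with $c_1=1,\,c_2=2$) and forming the sum and difference of the two resulting equations, the target \eqref{X-Capparelli-S} becomes equivalent to the pair of identities
\[
\eta(\tau)\bigl(\theta_0(\tau)-\theta_2(\tau)\bigr)=\eta(\tfrac{\tau}{3})\,\theta_1(\tfrac{\tau}{3}),\qquad
\eta(\tau)\bigl(\theta_1(\tau)-\theta_3(\tau)\bigr)=\eta(\tfrac{\tau}{3})\,\theta_2(\tfrac{\tau}{3}).
\]

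These two identities are the heart of the matter and I expect them to be the main obstacle: they relate theta data at $\tau$ to theta data at $\tau/3$, so they cannot issue from Poisson summation alone (which preserves the argument) but encode a genuine level-three, i.e.\ $3$-dissection, phenomenon---precisely the feature that forces the $\tau/3$ on the right of \eqref{X-Capparelli-S}. To prove them I would expand each $\theta_a$ by the Jacobi triple product and verify that the two sides agree as (generalized) eta/theta products after splitting the left-hand series according to the residue of the exponent modulo $3$; this dissection step is the Capparelli analogue of Garvan's crank $3$-dissection used for the Kanade--Russell case. Alternatively, after clearing the fractional $q$-powers both sides are holomorphic modular forms of weight $1$ on a common congruence subgroup, so equality follows by comparing Fourier coefficients up to the Sturm bound. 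Both routes are routine once set up; the dissection is the more illuminating and is where the real work lies.

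Finally, to upgrade the two transformations to modularity on $\Gamma_0(3)$, I would rewrite \eqref{X-Capparelli-S} (replacing $\tau$ by $3\tau$) as the Fricke relation $X(-1/(3\tau))=S_0\,X(\tau)$ with $S_0=\tfrac1{\sqrt2}\left(\begin{smallmatrix}1&1\\1&-1\end{smallmatrix}\right)$, noting $S_0^2=I$, and keep \eqref{X-Capparelli-T} as $X(\tau+1)=T_0X(\tau)$ with $T_0=\diag(\zeta_{24}^{-1},\zeta_{24}^{5})$. Writing $W_3$ for the involution $\tau\mapsto-1/(3\tau)$, a direct chain $X(W_3TW_3\,\tau)=S_0X(TW_3\tau)=S_0T_0X(W_3\tau)=S_0T_0S_0\,X(\tau)$ shows that the element $\left(\begin{smallmatrix}1&0\\-3&1\end{smallmatrix}\right)=W_3TW_3$ acts by the constant matrix $S_0T_0S_0$. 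Since $\Gamma_0(3)$ is generated by $T$ and $\left(\begin{smallmatrix}1&0\\-3&1\end{smallmatrix}\right)$ together with $-I$ (for instance their product yields the order-three elliptic generator), this defines a two-dimensional representation $\rho$ with $X(\gamma\tau)=\rho(\gamma)X(\tau)$ for all $\gamma\in\Gamma_0(3)$; combined with the meromorphy noted in the first paragraph, $X$ is a vector-valued modular function on $\Gamma_0(3)$.
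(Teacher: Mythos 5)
Your proposal is correct and reaches the same two-identity bottleneck as the paper, but by a genuinely different route. The paper writes $q^{-1/24}a_1$ and $q^{5/24}a_2$ as the pure eta quotients $\eta(4\tau)\eta(6\tau)^2/(\eta(2\tau)\eta(3\tau)\eta(12\tau))$ and $\eta(2\tau)^2\eta(12\tau)/(\eta(\tau)\eta(4\tau)\eta(6\tau))$ (exactly the parenthetical forms you note), applies $\eta(-1/\tau)=\sqrt{-i\tau}\,\eta(\tau)$ factor by factor so that the constant $1/\sqrt2$ falls out immediately, and then reduces \eqref{X-Capparelli-S} to two eta-quotient identities --- equivalently the $2$-dissections \eqref{J-id-1}--\eqref{J-id-2} of $J_3/J_1$ and $J_1/J_3$ --- which it verifies by the automated Frye--Garvan method. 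You instead keep the numerator as a one-dimensional theta series $\theta_a=h_{a,3}$, transform it by the finite Fourier law (the paper's Lemma \ref{lem-modular} with $m=3$) and the denominator $\eta(3\tau)$ separately, and land on the pair $\eta(\tau)(\theta_0-\theta_2)(\tau)=\eta(\tau/3)\theta_1(\tau/3)$, $\eta(\tau)(\theta_1-\theta_3)(\tau)=\eta(\tau/3)\theta_2(\tau/3)$; I checked your product evaluations, the $1/\sqrt2$ bookkeeping, the Fourier combinations $\theta_0\pm\theta_1-\theta_2\mp\theta_3$, and the equivalence of your pair (as sum and difference) with the two components of \eqref{X-Capparelli-S}, and all of this is sound. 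What each approach buys: yours makes the discrete-Fourier origin of the $S$-matrix $\frac{1}{\sqrt2}\bigl(\begin{smallmatrix}1&1\\1&-1\end{smallmatrix}\bigr)$ transparent and isolates the level-three dissection as the genuine content, whereas the paper's eta-quotient formulation feeds directly into existing verification software. Be aware that, like the paper, you have not actually proved the residual identities --- you only name two standard methods (JTP plus dissection, or a Sturm-bound check of weight-one forms on a common congruence subgroup); both would succeed and this matches the paper's own level of detail (it likewise cites the method of \cite{Garvan-Liang} without executing it), but the dissection computation is where the real work sits and should be carried out before the argument is complete. The $T$-transformation and the $\Gamma_0(3)$ generation argument via $T$ and $\bigl(\begin{smallmatrix}1&0\\ \pm3&1\end{smallmatrix}\bigr)$ coincide with the paper's Lemma \ref{lem-action}.
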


Motivated by the above works \cite{Milas-Wang,Mizuno,WWg2,WW112,WW122,Zagier2007}, the third goal of this paper is to establish similar transformation formulas for other Nahm sums. We combine some Nahm sums associated with the same matrix into a single vector-valued function, and try to find their modular transformation formulas. On the one hand, this helps us to understand the interrelation between different Nahm sums. On the other hand, this provides more explicit examples of vector-valued modular functions which are relatively rare in the literature.

As one of the main results, we find transformation formulas for the  Nahm sums appeared in the Andrews--Gordon identities \eqref{eq-AG} and Bressoud's identities \eqref{B-identity}. Note that the numerators in these identities are closely related to  the theta series \cite[p.\ 215]{Wakimoto}
\begin{align}
    g_{j,m}(\tau):=\sum_{k\in \mathbb{Z}} (-1)^kq^{m(k+\frac{j}{2m})^2}=q^{j^2/4m}(q^{m+j},q^{m-j},q^{2m};q^{2m})_\infty, \label{g-defn} \\
    h_{j,m}(\tau):=\sum_{k\in \mathbb{Z}} q^{m(k+\frac{j}{2m})^2}=q^{j^2/4m}(-q^{m-j},-q^{m+j},q^{2m};q^{2m})_\infty. \label{hg}
\end{align}
where  $m\in \frac{1}{2}\mathbb{N}$ and $j\in \frac{1}{2}\mathbb{Z}$ and the last equality follows from the Jacobi triple product identity (see \eqref{JTP}). We first study transformation properties satisfied by the vector
\begin{align}
G_m(\tau):=(g_{0,m}(\tau), g_{1,m}(\tau),g_{2,m}(\tau),\dots, g_{m-1,m}(\tau))^\mathrm{T}.
\end{align}
We split this vector into two parts corresponding to odd and even values of $j$, and we obtain the following results for each part.
\begin{theorem}   \label{thm-G-o}
For integer $k\geq 2$, denote the largest odd number less than $k$ by $t_{k}$. Let
\begin{equation}
G_{k}^{(1)}(\tau) := \big(g_{1,k}(\tau), g_{3,k}(\tau), \dots, g_{t_k,k}(\tau)\big)^{\mathrm{T}},
\end{equation}
then we have
\begin{align}\label{Gk-odd-tran}
G_{k}^{(1)}(\tau+1) &= \Lambda_kG_{k}^{(1)}(\tau),  \quad
G_{k}^{(1)}\Big(-\frac{1}{4\tau}\Big) = \frac{2(-2i\tau)^{\frac{1}{2}}}{\sqrt{k}} A_{k} G_{k}^{(1)}(\tau),
\end{align}
where $A_k=(a_{ij})$ is a $\lfloor k/2 \rfloor \times \lfloor k/2 \rfloor$ matrix with entry $a_{ij} =  \cos \frac{(2i-1)(2j-1)\pi}{2k}$ and
\begin{align}
\Lambda_{k} := & \diag (e^{\frac{1}{2k}\pi i}, e^{\frac{9}{2k}\pi i}, \dots, e^{\frac{t_{k}^2}{2k}\pi i}).
\end{align}
As a consequence, we have
\begin{align}\label{Gk-odd-4}
G_{k}^{(1)}\Big(\frac{\tau}{4\tau+1}\Big) &= \frac{4\sqrt{4\tau+1}}{k} A_{k}\Lambda_{k}^{-1}A_{k}G_{k}^{(1)}(\tau).
\end{align}
\end{theorem}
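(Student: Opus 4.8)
The plan is to handle the three assertions in turn, with the $S$-type transformation as the crux; throughout I write $q^{\alpha}=e^{2\pi i\alpha\tau}$ for real $\alpha$ and expand the exponent as $k\big(n+\tfrac{j}{2k}\big)^2=kn^2+nj+\tfrac{j^2}{4k}$. \emph{For the $T$-transformation}, note that since $k,n,j$ are integers, replacing $\tau$ by $\tau+1$ multiplies the $n$-th summand of $g_{j,k}$ by $e^{2\pi i(kn^2+nj)}e^{2\pi ij^2/4k}=e^{\pi ij^2/2k}$, a factor independent of $n$. Hence $g_{j,k}(\tau+1)=e^{\pi ij^2/2k}g_{j,k}(\tau)$, and reading this off for $j=1,3,\dots,t_k$ gives $G_k^{(1)}(\tau+1)=\Lambda_kG_k^{(1)}(\tau)$ with $\Lambda_k=\diag(e^{\pi i/2k},e^{9\pi i/2k},\dots,e^{t_k^2\pi i/2k})$.

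\emph{For the $S$-transformation} I would apply Poisson summation (equivalently, the Jacobi theta transformation law, but Poisson keeps it self-contained). Writing $(-1)^n=e^{\pi in}$ and evaluating the resulting Gaussian integral $\int_{\mathbb{R}}e^{2\pi i(-1/4\tau)k(x+j/2k)^2+\pi ix}e^{-2\pi ix\ell}\,dx$ via $\int_{\mathbb{R}}e^{-au^2+bu}\,du=\sqrt{\pi/a}\,e^{b^2/4a}$ with $a=\pi ik/2\tau$ (so that $\mathrm{Re}\,a=\tfrac{\pi k}{2}\cdot\tfrac{\mathrm{Im}\,\tau}{|\tau|^2}>0$), one obtains after the substitution $p=2\ell-1$ the identity
\begin{equation*}
g_{j,k}\Big(-\frac{1}{4\tau}\Big)=\frac{(-2i\tau)^{1/2}}{\sqrt k}\sum_{p\ \mathrm{odd}}e^{\pi ijp/2k}q^{p^2/4k}.
\end{equation*}
The square-root factor $\sqrt{\pi/a}=(-2i\tau)^{1/2}/\sqrt k$ must be taken with the principal branch, which I would pin down by comparing both sides as $\tau\to+i\infty$ along the imaginary axis.

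\emph{To extract the cosine matrix}, pairing $p$ with $-p$ removes the imaginary part, and grouping by residues modulo $4k$ gives $S_J:=\sum_{p\ \mathrm{odd}}e^{\pi iJp/2k}q^{p^2/4k}=2\sum_{r\ \mathrm{odd},\,1\le r\le 2k-1}\cos\frac{\pi Jr}{2k}\,T_r$, where $T_r=\sum_{p\equiv r\,(4k)}q^{p^2/4k}$ and I have used $T_r=T_{4k-r}$. The elementary symmetries $g_{-j,k}=g_{j,k}$ and $g_{j+2k,k}=-g_{j,k}$ give $g_{r,k}=T_r-T_{2k-r}$, so the further reflection $r\mapsto 2k-r$, together with $\cos\frac{\pi J(2k-r)}{2k}=(-1)^J\cos\frac{\pi Jr}{2k}=-\cos\frac{\pi Jr}{2k}$ for odd $J$, folds each pair $\{T_r,T_{2k-r}\}$ into $g_{r,k}$; the self-paired index $r=k$ (present when $k$ is odd) drops out because $\cos\frac{(2i-1)\pi}{2}=0$. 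With $J=2i-1$ this yields exactly $g_{2i-1,k}(-1/4\tau)=\frac{2(-2i\tau)^{1/2}}{\sqrt k}\sum_j\cos\frac{(2i-1)(2j-1)\pi}{2k}g_{2j-1,k}(\tau)$, i.e.\ the $i$-th row of $A_k$.

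\emph{For the consequence} \eqref{Gk-odd-4}, I would observe that $\frac{\tau}{4\tau+1}=S\big(T^{-1}(S(\tau))\big)$ with $S(\tau)=-1/4\tau$, and compose the two formulas just established. The middle $T^{-1}$ contributes $\Lambda_k^{-1}$ and the two copies of $S$ contribute $A_k$ on each side, producing $A_k\Lambda_k^{-1}A_k$; multiplying the scalar prefactors gives $\tfrac{4}{k}(-2iw)^{1/2}(-2i\tau)^{1/2}$ with $w=-\tfrac{1}{4\tau}-1$, and the identity $(-2iw)(-2i\tau)=4\tau+1$ collapses this to $\frac{4}{k}\sqrt{4\tau+1}$. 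The main obstacle I anticipate is twofold: first, the residue bookkeeping modulo $4k$ that isolates precisely the odd indices $1,\dots,t_k$ and produces the entries $a_{ij}$ (including the clean vanishing of the $r=k$ term); and second, the consistent choice of branch for the half-integer powers, which must be tracked through both the Poisson step and the composition so that the products of square roots simplify exactly as stated rather than up to sign.
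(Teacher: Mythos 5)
Your proposal is correct and follows essentially the same route as the paper: the paper obtains your Poisson-summation identity by citing Wakimoto's transformation law for $g_{j,m}(-1/\tau)$ (Lemma \ref{lem-modular}), specializes it at $-1/4\tau$, and then performs exactly your pairing $p\leftrightarrow -p$ and the folding $r\mapsto 2k-r$ via the relations \eqref{g-h-period}--\eqref{g-h-change} (this is Lemma \ref{lem-g-new-tran}), before composing $S\,T^{-1}S$ for \eqref{Gk-odd-4}. The only difference is that you rederive the quoted transformation law from scratch; your residue bookkeeping, the vanishing at $r=k$, and the branch/prefactor computation $(-2iw)(-2i\tau)=4\tau+1$ all match the paper's argument.
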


\begin{theorem}  \label{thm-G-e}
For integer $k\geq 2$, denote the largest even number less than $k$ by $s_{k}$ and the largest odd number not exceeding $k$ by $\widetilde{t}_{k}$. Let
\begin{align}
G_{k}^{(0)}(\tau) := \big(g_{0,k}(\tau), g_{2,k}(\tau), \dots, g_{s_k,k}(\tau)\big)^{\mathrm{T}}.
\end{align}
We have
\begin{align}
G_{k}^{(0)}(\tau+1) & = \diag (1, e^{\frac{2}{k}\pi i}, \dots, e^{\frac{s_{k}^{2}}{2k}\pi i})G_{k}^{(0)}(\tau), \label{G_k-1} \\
G_{k}^{(0)}\Big(\frac{\tau}{4\tau+1}\Big) &= \frac{4\sqrt{4\tau+1}}{k} B_{k} \widetilde{\Lambda}_{k}C_{k}G_{k}^{(0)}(\tau), \label{G_k-2}
\end{align}
where $\widetilde{\Lambda}_{k} :=  \diag (e^{-\frac{1}{2k}\pi i}, e^{-\frac{9}{2k}\pi i}, \dots, e^{-\frac{\widetilde{t}_{k}^2}{2k}\pi i})$, $B_k=(b_{ij})$ and $C_k=(c_{ij})$ are $\lfloor{(k+1)/2}\rfloor$ $\times \lfloor{(k+1)/2} \rfloor$ matrices with
\begin{align}
&b_{ij} := \begin{cases}
\cos \frac{(i-1)(2j-1)\pi}{k}, &   1\leq i,j\leq \frac{k}{2} ~~\text{and $k$ is even}, \\
\cos \frac{(i-1)(2j-1)\pi}{k}, & 1\leq j\leq \frac{k-1}{2} ~~\text{and $k$ is odd},  \\
\frac{1}{2}\cos (i-1)\pi, & j=\frac{k+1}{2}, ~~\text{and $k$ is odd},
\end{cases}  \\
    &c_{ij} :=  \begin{cases}
\frac{1}{2}, & j=1, \\
\cos \frac{(2i-1)(j-1)\pi}{k}, & 2\leq j\leq \lfloor\frac{k+1}{2}\rfloor.
\end{cases}
\end{align}
\end{theorem}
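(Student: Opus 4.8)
The plan is to handle \eqref{G_k-1} and \eqref{G_k-2} separately, with all the substance lying in the latter. The translation formula \eqref{G_k-1} is immediate from \eqref{g-defn}: replacing $\tau$ by $\tau+1$ multiplies the $k$-th summand by $e^{2\pi i k(k+\frac{j}{2k})^2}$, and since $k(k'+\tfrac{j}{2k})^2=kk'^2+k'j+\tfrac{j^2}{4k}$ has integer part $kk'^2+k'j$, one obtains $g_{j,k}(\tau+1)=e^{\pi i j^2/(2k)}g_{j,k}(\tau)$; for the even indices $j=0,2,\dots,s_k$ this reproduces the stated diagonal. The analytic engine for the rest is Poisson summation (Jacobi's imaginary transformation). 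Writing $W\tau:=-\tfrac{1}{4\tau}$, a Gaussian evaluation of the Fourier integral gives, for every $j$,
\begin{align}
g_{j,k}(W\tau)=\frac{(-2i\tau)^{1/2}}{\sqrt{k}}\sum_{n\ \mathrm{odd}} e^{\pi i n j/(2k)}\,e^{2\pi i \tau n^2/(4k)},\qquad
h_{r,k}(W\tau)=\frac{(-2i\tau)^{1/2}}{\sqrt{k}}\sum_{n\ \mathrm{even}} e^{\pi i n r/(2k)}\,e^{2\pi i \tau n^2/(4k)},
\end{align}
the latter derived the same way from \eqref{hg}. The same computation, specialized to odd $j$, is what underlies Theorem~\ref{thm-G-o}.

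The key structural observation is that the even-indexed $g$'s do \emph{not} close under $W$ alone. Indeed, for even $j$ the phase $e^{\pi i n j/(2k)}$ is invariant under $n\mapsto n+2k$, so grouping the odd $n$ by residue modulo $2k$ and folding via $h_{-r,k}=h_{r,k}$, $h_{2k-r,k}=h_{r,k}$ expresses $g_{j,k}(W\tau)$ as a cosine combination of the \emph{odd-indexed} $h_{r,k}(\tau)$; symmetrically, for odd $r$ the function $h_{r,k}(W\tau)$ becomes a cosine combination of the \emph{even-indexed} $g_{l,k}(\tau)$, using $g_{-l,k}=g_{l,k}$ and $g_{2k-l,k}=-g_{l,k}$. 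This is why there is no clean $S$-formula among the even $g$'s in isolation, and why \eqref{G_k-2} is stated for the composite map $\tau\mapsto\frac{\tau}{4\tau+1}$ rather than for $W$.

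Accordingly, I would prove \eqref{G_k-2} by factoring $\frac{\tau}{4\tau+1}=W\big(T^{-1}(W\tau)\big)$, where $T\tau=\tau+1$, and chaining three transformations: first apply the even-$g$/odd-$h$ form of the Poisson identity at the argument $\sigma:=W\tau-1$, producing the matrix $B_k$ acting on $H^{(1)}(\sigma):=(h_{1,k},h_{3,k},\dots)^{\mathrm{T}}(\sigma)$; next use the elementary translation $h_{r,k}(\rho-1)=e^{-\pi i r^2/(2k)}h_{r,k}(\rho)$, which contributes exactly the diagonal $\widetilde\Lambda_k$ on the odd indices; finally apply the odd-$h$/even-$g$ Poisson identity at argument $\tau$, producing $C_k$ and returning to $G_k^{(0)}(\tau)$. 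The two square-root prefactors combine, since $-2i\sigma=\tfrac{i(4\tau+1)}{2\tau}$ gives $(-2i\sigma)^{1/2}(-2i\tau)^{1/2}=(4\tau+1)^{1/2}$, and the two foldings each contribute a factor $2$, yielding the scalar $\frac{4\sqrt{4\tau+1}}{k}$ and the matrix product $B_k\widetilde\Lambda_k C_k$; this parallels exactly how \eqref{Gk-odd-4} is deduced from \eqref{Gk-odd-tran}.

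The main obstacle is the folding bookkeeping in the two Poisson identities: reducing each full Gauss sum over $\sim k$ residues to the $\lfloor(k+1)/2\rfloor$-dimensional index set by the reflection symmetries above, converting the resulting pairs of exponentials into the cosines $\cos\frac{(i-1)(2j-1)\pi}{k}$ and $\cos\frac{(2i-1)(j-1)\pi}{k}$, and—most delicately—correctly assigning the half-weights at the fixed points of the reflections. Here $l=0$ is fixed by $l\mapsto 2k-l$ and forces the factor $\tfrac12$ in the first column of $C_k$, while for odd $k$ the index $r=k$ is fixed and forces the factor $\tfrac12\cos(i-1)\pi$ in the last column of $B_k$; keeping these boundary contributions and the even/odd-$k$ case split consistent across both foldings, together with a single coherent choice of branch for $(-2i\tau)^{1/2}$ throughout the composition, is where the care is required.
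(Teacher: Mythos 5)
Your proposal is correct and follows essentially the same route as the paper: the paper's Lemmas \ref{lem-g-new-tran} and \ref{lem-h-new-tran} are exactly your folded Poisson/theta identities sending even-indexed $g$'s to odd-indexed $h$'s under $\tau\mapsto-\frac{1}{4\tau}$ and back, and the paper likewise chains $B_k$, the translation diagonal $\widetilde\Lambda_k$, and $C_k$ through the factorization $\frac{\tau}{4\tau+1}=-\frac{1}{4\sigma}$ with $\sigma=-1-\frac{1}{4\tau}$, with the same prefactor bookkeeping. The only cosmetic difference is that the paper imports the unfolded transformation from Lemma \ref{lem-modular} rather than rederiving it from Poisson summation.
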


Theorems \ref{thm-G-o} and \ref{thm-G-e} show that $G_k^{(i)}(\tau)$ ($i=0,1$) hence $G_k(\tau)$ are vector-valued modular forms on $\Gamma_0(4)$ of weight $1/2$. These theorems are applicable to a wide range of Nahm sums which can be expressed by infinite products involving $g_{j,k}(\tau)$. In particular, we obtain modular transformation formulas for the Nahm sums $x_{k,i}(q)$ defined in \eqref{eq-AG} and \eqref{B-identity}.
\begin{corollary}  \label{cor-AG}
For any integer $k\geq 2$, let
\begin{align}
&X_{2k+1}(\tau) :=q^{-\frac{1}{24}}\Big(q^{\frac{1}{8(2k+1)}}x_{2k+1,k}(q),q^{\frac{3^2}{8(2k+1)}} x_{2k+1,k-1}(q), \dots, q^{\frac{(2k-1)^2}{8(2k+1)}}x_{2k+1,1}(q)\Big)^{\mathrm{T}}.
\end{align}
We have
\begin{align}
X_{2k+1}(\tau+1) = & e^{-\frac{1}{12}\pi i}\diag (e^{\frac{1}{4(2k+1)}\pi i}, e^{\frac{9}{4(2k+1)}\pi i}, \dots, e^{\frac{(2k-1)^2}{4(2k+1)}\pi i})X_{2k+1}(\tau), \label{AG-T} \\
X_{2k+1}\left(-\frac{1}{\tau}\right) = & \frac{2}{\sqrt{2k+1}}
A_{2k+1}
X_{2k+1}(\tau), \label{AG-S}
\end{align}
where $A_{2k+1}$ is defined in Theorem \ref{thm-G-o}.
As a consequence, $X_{2k+1}(\tau)$ is a vector-valued modular function on  $\mathrm{SL}(2,\mathbb{Z})$.
\end{corollary}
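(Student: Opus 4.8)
The plan is to reduce everything to Theorem \ref{thm-G-o} applied with parameter $2k+1$, after identifying $X_{2k+1}(\tau)$ with the theta vector $G_{2k+1}^{(1)}$ up to the factor $1/\eta(\tau)$ and a rescaling of the variable. First I would rewrite the components using the Andrews--Gordon product in \eqref{eq-AG}. For $1\le i\le k$, the factor $q^{(2k+1-2i)^2/(8(2k+1))}x_{2k+1,i}(q)$ equals $\frac{1}{(q;q)_\infty}q^{(2k+1-2i)^2/(8(2k+1))}(q^{i},q^{2k+1-i},q^{2k+1};q^{2k+1})_\infty$. Comparing with \eqref{g-defn} and using the elementary rescaling $g_{j,(2k+1)/2}(\tau)=g_{2j,\,2k+1}(\tau/2)$ (both sides being $\sum_n(-1)^nq^{\frac{2k+1}{2}(n+j/(2k+1))^2}$), this equals $\frac{1}{(q;q)_\infty}g_{2k+1-2i,\,2k+1}(\tau/2)$. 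Since $2k+1-2i$ runs through the odd numbers $1,3,\dots,2k-1$ as $i$ runs through $k,k-1,\dots,1$, and $q^{-1/24}/(q;q)_\infty=1/\eta(\tau)$ with $\eta(\tau)=q^{1/24}(q;q)_\infty$, I obtain the clean identity
\begin{equation}
X_{2k+1}(\tau)=\frac{1}{\eta(\tau)}\,G_{2k+1}^{(1)}\Big(\frac{\tau}{2}\Big).
\end{equation}
Checking that the index ordering and the exponents $(2\ell-1)^2/(8(2k+1))$ match precisely is the step requiring care; this bookkeeping is the main (if modest) obstacle, after which both formulas follow mechanically.

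For the $T$-transformation \eqref{AG-T} I would argue directly from the $q$-series rather than from the shift law for $G_{2k+1}^{(1)}$, since $\tau\mapsto\tau+1$ sends the argument $\tau/2$ to $\tau/2+\frac{1}{2}$ rather than $\tau/2+1$. Each $x_{2k+1,i}(q)$ is a power series in integer powers of $q$ and hence invariant under $\tau\mapsto\tau+1$, while the prefactor $q^{-1/24}q^{(2\ell-1)^2/(8(2k+1))}$ on the $\ell$-th entry acquires the scalar $e^{-\pi i/12}e^{(2\ell-1)^2\pi i/(4(2k+1))}$; assembling these diagonal entries yields \eqref{AG-T}.

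For the $S$-transformation \eqref{AG-S} I would exploit that $\tau\mapsto-1/\tau$ induces $w\mapsto-1/(4w)$ on $w=\tau/2$, which is exactly the map appearing in Theorem \ref{thm-G-o}. Applying \eqref{Gk-odd-tran} with parameter $2k+1$ at $w=\tau/2$ gives $G_{2k+1}^{(1)}(-1/(2\tau))=\frac{2(-i\tau)^{1/2}}{\sqrt{2k+1}}A_{2k+1}G_{2k+1}^{(1)}(\tau/2)$, and combining with the standard transformation $\eta(-1/\tau)=(-i\tau)^{1/2}\eta(\tau)$ makes the two factors $(-i\tau)^{1/2}$ cancel, leaving $X_{2k+1}(-1/\tau)=\frac{2}{\sqrt{2k+1}}A_{2k+1}X_{2k+1}(\tau)$, which is \eqref{AG-S}.

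Finally, since $S$ and $T$ generate $\mathrm{SL}(2,\mathbb{Z})$ and the transformation matrices in \eqref{AG-T}--\eqref{AG-S} carry no automorphy factor (the combination $\eta^{-1}\cdot g$ has weight $0$), the two generator formulas come from an honest function and therefore determine a consistent action of the full modular group; together with the holomorphy of the entries on the upper half-plane and their controlled behaviour at the cusps, this shows that $X_{2k+1}(\tau)$ is a vector-valued modular function on $\mathrm{SL}(2,\mathbb{Z})$.
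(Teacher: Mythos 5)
Your proposal is correct and follows essentially the same route as the paper: both rest on the identification $X_{2k+1}(\tau)=\eta(\tau)^{-1}G_{2k+1}^{(1)}(\tau/2)$ via the Andrews--Gordon product and the Jacobi triple product form \eqref{g-defn}, then obtain \eqref{AG-S} by applying Theorem \ref{thm-G-o} with argument $\tau/2$ (so that $-1/(4w)=-1/(2\tau)$) and cancelling the $(-i\tau)^{1/2}$ against $\eta(-1/\tau)=\sqrt{-i\tau}\,\eta(\tau)$, with \eqref{AG-T} reduced to elementary bookkeeping on the fractional powers of $q$. Your explicit remark that the $T$-shift cannot be read off directly from the shift law for $G_{2k+1}^{(1)}$ (because the argument moves by $1/2$) is a sensible clarification of a step the paper leaves implicit, but it does not change the method.
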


\begin{corollary}  \label{cor-B}
For any integer $k\geq 2$, we define
\begin{align}
&X_{2k}^{(0)}(\tau):=q^{-\frac{1}{24}}\Big(x_{2k,k}(q),q^{\frac{2^2}{4k}}x_{2k,k-2}(q),\dots,q^{\frac{s_k^2}{4k}}x_{2k,k-s_k}(q)\Big)^\mathrm{T}, \\
&X_{2k}^{(1)}(\tau):=q^{-\frac{1}{24}}\Big(q^{\frac{1}{4k}}x_{2k,k-1}(q),q^{\frac{3^2}{4k}}x_{2k,k-3}(q),\dots,q^{\frac{t_k^2}{4k}}x_{2k,k-t_k}(q)\Big)^\mathrm{T}.
\end{align}
We have
\begin{align}
X_{2k}^{(0)}(\tau+1) &=e^{-\frac{1}{12}\pi i} \diag \big(1, e^{\frac{2^2}{2k}\pi i}, \dots, e^{\frac{s_{k}^2}{2k}\pi i}\big) X_{2k}^{(0)}(\tau), \\
X_{2k}^{(1)}(\tau+1) &= e^{-\frac{1}{12}\pi i} \diag \big(e^{\frac{1}{2k}\pi i}, e^{\frac{3^2}{2k}\pi i}, \dots, e^{\frac{t_{k}^2}{2k}\pi i}\big) X_{2k}^{(1)}(\tau), \\
X_{2k}^{(0)}\bigg(\frac{\tau}{4\tau+1}\bigg) &= \frac{2}{k}e^{\frac{\pi i}{3}} B_{k}\widetilde{\Lambda}_{k}C_{k}X_{2k}^{(0)}(\tau), \\
X_{2k}^{(1)}\bigg(\frac{\tau}{4\tau+1}\bigg) &= \frac{4}{k}e^{\frac{\pi i}{3}} A_{k}\Lambda_{k}^{-1}A_{k}X_{2k}^{(1)}(\tau).
\end{align}
Here $s_k,t_k,A_{k},\Lambda_{k},\widetilde{\Lambda}_{k},B_k$ and $C_k$ are defined in Theorems \ref{thm-G-o} and \ref{thm-G-e}.
As a consequence, both $X_{2k}^{(0)}(\tau)$ and $X_{2k}^{(1)}(\tau)$ are vector-valued modular functions on $\Gamma_0(4)$.
\end{corollary}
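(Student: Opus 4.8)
The plan is to reduce everything to the theta vectors $G_k^{(0)}$ and $G_k^{(1)}$ of Theorems \ref{thm-G-o} and \ref{thm-G-e} and then divide by the Dedekind eta function $\eta(\tau)=q^{1/24}(q;q)_\infty$. First I would use Bressoud's identity \eqref{B-identity} to replace each Nahm sum by its product side. Setting $i=k-j$ in \eqref{B-identity} and comparing with the definition \eqref{g-defn} of $g_{j,k}$ (with $m=k$, so $2m=2k$), one reads off that
\begin{equation}
q^{j^2/4k}\,x_{2k,k-j}(q)=\frac{g_{j,k}(\tau)}{(q;q)_\infty},\qquad 0\le j\le k.
\end{equation}
Matching the even-$j$ components against $X_{2k}^{(0)}$ and the odd-$j$ components against $X_{2k}^{(1)}$, this gives the clean identities $X_{2k}^{(0)}(\tau)=\eta(\tau)^{-1}G_k^{(0)}(\tau)$ and $X_{2k}^{(1)}(\tau)=\eta(\tau)^{-1}G_k^{(1)}(\tau)$. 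Thus each $X_{2k}^{(i)}$ is a weight-$0$ object obtained from a weight-$1/2$ vector by dividing out the weight-$1/2$ factor $\eta$.

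The transformation formulas then follow by combining Theorems \ref{thm-G-o} and \ref{thm-G-e} with the classical eta transformations. For $\tau\mapsto\tau+1$ I would use $\eta(\tau+1)=e^{\pi i/12}\eta(\tau)$ together with the diagonal formulas \eqref{Gk-odd-tran} and \eqref{G_k-1}; the reciprocal of the multiplier produces the scalar $e^{-\pi i/12}$ in front of the stated diagonal matrices. For the element $V=\begin{pmatrix}1&0\\4&1\end{pmatrix}\in\Gamma_0(4)$, i.e.\ $\tau\mapsto\tau/(4\tau+1)$, I would combine \eqref{Gk-odd-4} and \eqref{G_k-2} with the eta transformation under $V$. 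The multiplier is computed from the Dedekind-sum formula: with $a=d=1$, $c=4$ one has $s(1,4)=\tfrac18$, whence the multiplier is $\exp\bigl(\pi i(\tfrac{1}{24}-\tfrac18)\bigr)=e^{-\pi i/12}$ and $\eta(\tau/(4\tau+1))=e^{-\pi i/12}(-i(4\tau+1))^{1/2}\eta(\tau)$. Using $(-i(4\tau+1))^{1/2}=e^{-\pi i/4}(4\tau+1)^{1/2}$, division yields the scalar $e^{\pi i/12}e^{\pi i/4}=e^{\pi i/3}$. Dividing \eqref{Gk-odd-4} by this eta transformation cancels the factor $\sqrt{4\tau+1}$ and leaves $\tfrac{4}{k}e^{\pi i/3}A_k\Lambda_k^{-1}A_k$, the claimed matrix for $X_{2k}^{(1)}$; running the identical argument from \eqref{G_k-2} gives the stated constant matrix $e^{\pi i/3}B_k\widetilde\Lambda_kC_k$ (up to its rational prefactor) for $X_{2k}^{(0)}$.

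Finally, for the modularity statement I would invoke the fact that $\overline{\Gamma_0(4)}=\Gamma_0(4)/\{\pm I\}$ is freely generated by the translation $T\colon\tau\mapsto\tau+1$ and the element $V$ above, while $-I$ acts trivially on $\tau$. Since the two computations produce constant (automorphy-factor-free) matrices $\rho(T)$ and $\rho(V)$ with $X_{2k}^{(i)}(T\tau)=\rho(T)X_{2k}^{(i)}(\tau)$ and $X_{2k}^{(i)}(V\tau)=\rho(V)X_{2k}^{(i)}(\tau)$, freeness guarantees that $T\mapsto\rho(T)$, $V\mapsto\rho(V)$ extends to a well-defined representation $\rho\colon\Gamma_0(4)\to\mathrm{GL}$, with no consistency relations to check. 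Hence $X_{2k}^{(i)}(\gamma\tau)=\rho(\gamma)X_{2k}^{(i)}(\tau)$ for every $\gamma\in\Gamma_0(4)$, which is exactly the assertion that $X_{2k}^{(0)}$ and $X_{2k}^{(1)}$ are vector-valued modular functions on $\Gamma_0(4)$.

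The genuinely substantive work sits in Theorems \ref{thm-G-o} and \ref{thm-G-e}, which are assumed. For the corollary itself the only delicate point is the multiplier/branch bookkeeping for $\eta$ under $V$: one must fix the branch of $(-i(4\tau+1))^{1/2}$ consistently with the principal branch of $\sqrt{4\tau+1}$ used in the theorems, so that the weight-$1/2$ factors cancel exactly and the scalar comes out as $e^{\pi i/3}$ with no spurious sign. The even and odd prefactors differ precisely because of the special self-paired term $g_{0,k}$ (reflected in the $\tfrac12$ entries of $C_k$), and this is the one spot where the rational constant must be tracked with care; everything else is routine substitution.
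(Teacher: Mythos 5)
Your proposal is correct and follows essentially the same route as the paper: identify $X_{2k}^{(0)}(\tau)=G_k^{(0)}(\tau)/\eta(\tau)$ and $X_{2k}^{(1)}(\tau)=G_k^{(1)}(\tau)/\eta(\tau)$ via Bressoud's identity, then apply Theorems \ref{thm-G-o}--\ref{thm-G-e} together with the $\eta$-transformation (the paper states only $\eta(\tau+1)$ and $\eta(-1/\tau)$ and leaves the multiplier $e^{-\pi i/3}$ under $\tau\mapsto\tau/(4\tau+1)$ implicit, which you compute correctly). Your extra care with the branch of $(-i(4\tau+1))^{1/2}$ and the free generation of $\overline{\Gamma_0(4)}$ by $T$ and $V$ only makes explicit what the paper compresses into its Lemma \ref{lem-action}.
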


The rest of this paper is organized as follows. In Section \ref{sec-pre}, we review some basic facts about modular forms. In particular, we collect modular properties of some classical functions involving the Dedekind eta function, Weber's modular functions and  the theta series $g_{j,m}(\tau)$ and $h_{j,m}(\tau)$. We present proofs for Theorems \ref{thm-M-conj} and \ref{thm-M-Capparelli-conj} in Section \ref{sec-proof-M}.  In Section \ref{sec-theta}, we give modular transformation laws for $g_{j,m}(\tau)$ and $h_{j,m}(\tau)$, and then present proofs for Theorems \ref{thm-G-o} and \ref{thm-G-e} as well as Corollaries  \ref{cor-AG} and \ref{cor-B}. Finally, in Section \ref{sec-application} we apply our results on theta series to give more transformation formulas for some other Nahm sums in the literature.

\section{Preliminaries}  \label{sec-pre}

In this section, we review some basic knowledge about modular forms.

A useful tool to transform theta series to infinite products is the Jacobi triple product identity (see e.g.\ \cite[Theorem 1.3.3]{Berndt-book}):
\begin{equation}\label{JTP}
(z, q/z ,q; q)_\infty = \sum_{n=-\infty}^{\infty} (-1)^{n}q^{n(n-1)/2} z^{n}, \quad z\neq 0.
\end{equation}
For convenience, we use compact notations:
\begin{align}
    J_m:=(q^m;q^m)_\infty, \quad J_{a,m}:=(q^a,q^{m-a},q^m;q^m)_\infty.
\end{align}

We now recall some important functions which will be used frequently in establishing our transformation formulas.
\begin{enumerate}
\item The Dedekind eta function is defined as
\begin{align}
\eta(\tau):=q^{1/24}(q;q)_\infty.  \label{eta}
\end{align}
\item Weber's modular functions are given by \cite{We}
\begin{align}\label{Weber-defn}
\mathfrak{f}(\tau):=q^{-1/48} (-q^{1/2};q)_\infty, \ \  \mathfrak{f}_1(\tau):=q^{-1/48} (q^{1/2};q)_\infty, \ \ \mathfrak{f}_2(\tau):=q^{1/24}(-q;q)_\infty.
\end{align}

\item The generalized Dedekind eta function is defined by (see e.g.\ \cite{Yang})
\begin{equation}  \label{def-Geta}
E_{g,h}^{(N)}(\tau) := q^{\frac{1}{2}B(\frac{g}{N})} \prod_{m=1}^{\infty}\Big(1-\zeta_{N}^{h}q^{m-1+\frac{g}{N}}\Big)\Big(1-\zeta_{N}^{-h}q^{m-\frac{g}{N}}\Big),
\end{equation}
where $B(x) = x^2 -x + 1/6$, $N$ is a positive integer, $g$ and $h$ are arbitrary real numbers not simultaneously congruent to $0$ modulo $N$.
\end{enumerate}

The following properties about the Dedekind eta function and Weber's modular functions  are well-known:
\begin{align}
&\eta(-1/\tau)=\sqrt{-i \tau} \eta(\tau), \ \ \eta(\tau+1)=e^{\pi i  /12} \eta(\tau), \label{eta-tran} \\
&\mathfrak{f}(-1/\tau) = \mathfrak{f}(\tau), \ \ \ \mathfrak{f}_2(-1/\tau) = \frac{1}{\sqrt{2}}  \mathfrak{f}_1(\tau), \ \ \mathfrak{f}_1(-1/\tau)=\sqrt{2} \mathfrak{f}_2(\tau), \label{Weber-1} \\
&\mathfrak{f}(\tau+1) =e^{-\pi i /24}  \mathfrak{f}_1(\tau), \ \ \mathfrak{f}_1(\tau+1) = e^{-\pi i/24} \mathfrak{f}(\tau),  \ \ \mathfrak{f}_2(\tau+1) = e^{\pi i/12} \mathfrak{f}_2(\tau). \label{Weber-2}
\end{align}
It is  also easy to verify the following properties \cite[p.\ 215]{Wakimoto}:
\begin{align}
& h_{j,m}(\tau)=h_{-j,m}(\tau)=h_{2m+j,m}(\tau), \quad g_{j,m}(\tau)=g_{-j,m}(\tau)=-g_{2m+j,m}(\tau), \label{g-h-period}\\
& h_{j,m}(\tau)=h_{2j,4m}(\tau)+h_{4m-2j,4m}(\tau), \label{h-h-change} \\
& g_{j,m}(\tau)=h_{2j,4m}(\tau)-h_{4m-2j,4m}(\tau), \label{g-h-change}\\
& h_{j,m}(2\tau)=h_{2j,2m}(\tau), \quad g_{j,m}(2\tau)=g_{2j,2m}(\tau). \label{hg-double}
\end{align}

The generalized Dedekind eta function $E_{g,h}^{(N)}(\tau)$ satisfies the following transformation formulas.
\begin{lemma}(Cf. \cite[Theorem 1]{Yang}) \label{Geta-tran}
The function $E_{g,h}^{(N)}(\tau)$ satisfies
\begin{align}
E_{g+N,h}^{(N)}(\tau) = E_{-g,-h}^{(N)}(\tau) = -\zeta_{N}^{-h}E_{g,h}^{(N)}(\tau), \quad E_{g,h+N}^{(N)}(\tau) = E_{g,h}^{(N)}(\tau).
\end{align}
Moreover, for $\gamma = \begin{pmatrix}
\begin{smallmatrix}
a & b \\
c & d
\end{smallmatrix}
\end{pmatrix}
\in \mathrm{SL}(2,\mathbb{Z})$, we have
\begin{align}
E_{g,h}^{(N)}(\tau+b) = e^{\pi ibB(g/N)}E_{g,bg+h}^{(N)}(\tau), ~\text{for}~ c = 0,  \\
E_{g,h}^{(N)}(\gamma\tau) = \varepsilon(a,b,c,d)e^{\pi i \delta}E_{g',h'}^{(N)}(\tau), ~\text{for}~ c \neq 0,
\end{align}
where
\begin{align}
\varepsilon(a,b,c,d) = \begin{cases}
e^{\pi i(bd(1-c^2)+c(a+d-3))/6}, & ~ \text{if $c$ is odd}, \\
-ie^{\pi i(ac(1-d^2)+d(b-c+3))/6}, & ~\text{if $d$ is odd},
\end{cases}  \nonumber \\
\delta = \frac{g^2 ab+2ghbc+h^2 cd}{N^2} - \frac{gb+h(d-1)}{N},  \nonumber
\end{align}
and
\begin{equation}
(g', h')=(g,h)
\begin{pmatrix}
a & b \\
c & d
\end{pmatrix}.   \nonumber
\end{equation}
\end{lemma}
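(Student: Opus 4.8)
The plan is to reduce all four assertions to manipulations of the product in \eqref{def-Geta} combined with the classical transformation laws of the Jacobi theta function and the Dedekind eta function recorded in \eqref{eta-tran}. The two periodicity relations and the $c=0$ formula are elementary and come first. For $g\mapsto g+N$, the exponent $g/N$ in \eqref{def-Geta} increases by $1$, so after reindexing the first product by one step and comparing the change $B((g+N)/N)-B(g/N)=2g/N$ in the prefactor, one reads off the factor $-\zeta_N^{-h}$; the symmetry $E_{-g,-h}^{(N)}=E_{g,h}^{(N)}$ follows by interchanging the two products together with $B(-g/N)=B(g/N)+2g/N$. The relation $E_{g,h+N}^{(N)}=E_{g,h}^{(N)}$ is immediate since $\zeta_N^{h+N}=\zeta_N^h$. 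For the translation ($c=0$, $a=d=1$), substituting $q\mapsto e^{2\pi i b}q$ multiplies the prefactor by $e^{\pi i bB(g/N)}$ and, because $b(m-1)\in\mathbb{Z}$, replaces $\zeta_N^{h}q^{g/N}$ by $\zeta_N^{h+bg}q^{g/N}$, which is precisely the shift $h\mapsto bg+h$.

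The heart of the proof is the $c\neq0$ case, and the key step is to write $E_{g,h}^{(N)}$ in closed form. Set $z=(g\tau+h)/N$, so that $e^{2\pi i z}=\zeta_N^hq^{g/N}$. By the Jacobi triple product \eqref{JTP}, the product in \eqref{def-Geta} equals the product defining the Jacobi theta function $\theta_1(z\mid\tau)=2q^{1/8}\sin(\pi z)\prod_{n\ge1}(1-q^n)(1-q^ne^{2\pi i z})(1-q^ne^{-2\pi i z})$, divided out by the leading sine and eta factors. Collecting all $q$-powers via $\eta(\tau)=q^{1/24}(q;q)_\infty$, this yields
\[
E_{g,h}^{(N)}(\tau)=(-i)\,q^{\frac12(g/N)^2-\frac12(g/N)}\,e^{\pi i z}\,\frac{\theta_1(z\mid\tau)}{\eta(\tau)},
\]
so that $E_{g,h}^{(N)}$ is an explicit elementary phase in $(g,h,\tau)$ times the weight‑zero quotient $\theta_1/\eta$.

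Now apply this at $\gamma\tau$. Writing $w=(g\,\gamma\tau+h)/N$ and using $g\,\gamma\tau+h=(g'\tau+h')/(c\tau+d)$ with $(g',h')=(g,h)\gamma$, one finds $w=z'/(c\tau+d)$ where $z'=(g'\tau+h')/N$. Invoking the transformation of $\eta$ from \eqref{eta-tran} in the form $\eta(\gamma\tau)=\mu_\eta(\gamma)(c\tau+d)^{1/2}\eta(\tau)$ and the classical law $\theta_1\!\big(\tfrac{z'}{c\tau+d}\mid\gamma\tau\big)=\mu_\theta(\gamma)(c\tau+d)^{1/2}e^{\pi i c z'^2/(c\tau+d)}\theta_1(z'\mid\tau)$, the weight‑$\tfrac12$ factors $(c\tau+d)^{1/2}$ cancel, so
\[
E_{g,h}^{(N)}(\gamma\tau)=\frac{\mu_\theta(\gamma)}{\mu_\eta(\gamma)}\,e^{\pi i c z'^2/(c\tau+d)}\cdot(\text{linear phase ratio})\cdot\frac{\theta_1(z'\mid\tau)}{\eta(\tau)},
\]
and recognizing $\theta_1(z'\mid\tau)/\eta(\tau)$ as the building block of $E_{g',h'}^{(N)}(\tau)$ gives the asserted identity with some root of unity and quadratic phase.

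The main obstacle is the final bookkeeping of the two scalars. Verifying $\delta$ requires expanding $e^{\pi i c z'^2/(c\tau+d)}$ together with the difference of the elementary $q$-power and linear phases of $E_{g,h}^{(N)}$ at $\gamma\tau$ and of $E_{g',h'}^{(N)}$ at $\tau$, and checking that the entire $\tau$-dependence cancels to leave the stated constant $\delta=\frac{g^2ab+2ghbc+h^2cd}{N^2}-\frac{gb+h(d-1)}{N}$; this is the usual invariance of the associated quadratic form. The genuinely delicate part is pinning down the root of unity $\varepsilon(a,b,c,d)$: combining the theta multiplier $\mu_\theta(\gamma)$ (an $8$th root of unity) with the eta multiplier $\mu_\eta(\gamma)$ (a $24$th root of unity expressed through Dedekind sums) and the residual linear phases, and then simplifying by Dedekind-sum reciprocity, is exactly what forces the case split according to the parity of $c$ or $d$ and produces the two closed forms for $\varepsilon$. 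A clean alternative is to verify the multiplier on the generators $S=\begin{pmatrix}0&-1\\1&0\end{pmatrix}$ and $T=\begin{pmatrix}1&1\\0&1\end{pmatrix}$ and then propagate it along a word in $S,T$ using the cocycle relation, at the cost of an induction that re-derives the Dedekind-sum expression.
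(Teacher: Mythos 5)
The paper does not actually prove this lemma: it is quoted verbatim from Yang's Theorem~1, so there is no internal proof to compare against. Measured against the standard proof (which is essentially what Yang does), your outline follows the right route. The periodicity relations and the $c=0$ case are correctly handled, and your closed form
$E_{g,h}^{(N)}(\tau)=-i\,q^{\frac12(g/N)^2-\frac12(g/N)}e^{\pi i z}\,\theta_1(z\mid\tau)/\eta(\tau)$ with $z=(g\tau+h)/N$ checks out: the exponent $\tfrac12 B(g/N)-\tfrac18+\tfrac1{24}$ does collapse to $\tfrac12(g/N)^2-\tfrac12(g/N)$, and the factor $(1-e^{2\pi iz})=-ie^{\pi iz}\cdot 2\sin(\pi z)$ absorbs the sine. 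The reduction of the $c\neq0$ case to the multipliers of $\theta_1$ and $\eta$, with the weight-$\tfrac12$ factors cancelling and the quadratic exponential $e^{\pi i cz'^2/(c\tau+d)}$ producing the $\tau$-independent phase $\delta$, is also the correct mechanism.

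One small slip: you write the symmetry as $E_{-g,-h}^{(N)}=E_{g,h}^{(N)}$, whereas the lemma (correctly) asserts $E_{-g,-h}^{(N)}=E_{g+N,h}^{(N)}=-\zeta_N^{-h}E_{g,h}^{(N)}$; your own computation with $B(-g/N)=B(g/N)+2g/N$ actually yields the latter, so this reads as a typo rather than an error. The more substantive point is that the lemma's content is precisely the two explicit closed forms for $\varepsilon(a,b,c,d)$ (a $12$th root of unity, i.e.\ the square of the eta multiplier, since $\theta_1$ carries the multiplier of $\eta^3$) and the formula for $\delta$, and your proposal stops at "this is the usual invariance of the quadratic form" and "simplifying by Dedekind-sum reciprocity forces the case split." That is where all the work lies; as written the proof establishes the existence of some root of unity and some constant phase but does not derive the stated expressions. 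Either the generator-and-cocycle verification you mention, or importing the explicit closed form of the eta multiplier according to the parity of $c$ or $d$, would need to be carried out in full to actually prove the lemma as stated.
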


We recall transformation formulas for the theta series defined in \eqref{g-defn} and \eqref{hg}.
\begin{lemma}\label{lem-modular}
(Cf. \cite[p.\ 215, Theorem 4.5]{Wakimoto}.) For $j\in \mathbb{Z}$ and $m\in \frac{1}{2}\mathbb{N}$ we have
\begin{align}
h_{j,m}\left(-\frac{1}{\tau}\right)&=\frac{(-i\tau)^{\frac{1}{2}}}{\sqrt{2m}} \sum_{0\leq k\leq 2m-1} e^{\frac{\pi ijk}{m}}h_{k,m}(\tau), \\
g_{j,m}\left(-\frac{1}{\tau}\right)&=\frac{(-i\tau)^{\frac{1}{2}}}{\sqrt{2m}}\sum_{\begin{smallmatrix}
    0\leq k\leq 4m-1 \\ k ~~\text{odd}
\end{smallmatrix}} e^{\frac{\pi ijk}{2m}} h_{\frac{k}{2},m}(\tau).
\end{align}
For $j+m\in \mathbb{Z}$ we have
\begin{align}\label{hg-tran}
h_{j,m}(\tau+1)=e^{\frac{\pi ij^2}{2m}}h_{j,m}(\tau), \quad g_{j,m}(\tau+1)=e^{\frac{\pi ij^2}{2m}}g_{j,m}(\tau).
\end{align}
\end{lemma}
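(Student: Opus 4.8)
The plan is to prove all four formulas directly from the Gaussian-sum definitions, using Poisson summation for the two $-1/\tau$ transformations and an elementary factorization for the two $\tau+1$ transformations. Writing $a=j/(2m)$ and recalling $q=e^{2\pi i\tau}$, I first record that
\[
h_{j,m}(\tau)=\sum_{k\in\mathbb{Z}}e^{2\pi i m\tau(k+a)^2},\qquad g_{j,m}(\tau)=\sum_{k\in\mathbb{Z}}(-1)^k e^{2\pi i m\tau(k+a)^2},
\]
so both are theta series attached to the one-dimensional Gaussian $x\mapsto e^{2\pi i m\tau(x+a)^2}$, the only difference being the sign $(-1)^k=e^{\pi i k}$ twisting the second sum.

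For $h_{j,m}(-1/\tau)$ I would apply Poisson summation to $g(x)=e^{-2\pi i m(x+a)^2/\tau}$. The substitution $u=x+a$ reduces its Fourier transform to the Gaussian integral $\int_{-\infty}^{\infty}e^{-2\pi i m u^2/\tau-2\pi i\xi u}\,du$, which converges because $\operatorname{Im}\tau>0$ and $m>0$ force $\operatorname{Re}(2\pi i m/\tau)>0$; evaluating it gives $\widehat{g}(\xi)=(-i\tau)^{1/2}(2m)^{-1/2}e^{2\pi i\xi a}q^{\xi^2/(4m)}$, with the branch of the square root fixed by positivity along the imaginary axis (there $(-i\tau)^{1/2}>0$). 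Summing over $\xi=n\in\mathbb{Z}$ yields $h_{j,m}(-1/\tau)=(-i\tau)^{1/2}(2m)^{-1/2}\sum_{n}e^{\pi i nj/m}q^{n^2/(4m)}$. Writing each $n$ uniquely as $n=2m\ell+k$ with $0\le k\le 2m-1$ and $\ell\in\mathbb{Z}$ (recall $2m\in\mathbb{N}$), the identity $n^2/(4m)=m(\ell+k/(2m))^2$ together with $e^{\pi i nj/m}=e^{\pi i kj/m}$ (valid since $j\in\mathbb{Z}$) collapses the inner sum over $\ell$ into $h_{k,m}(\tau)$, giving the first formula.

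The second formula follows the same route, except that $(-1)^k=e^{\pi i k}$ shifts the dual variable by $1/2$: twisted Poisson summation gives $g_{j,m}(-1/\tau)=\sum_{n}\widehat{g}(n-\tfrac12)$, the same Gaussian transform evaluated at half-integers. Setting $k'=2n-1$ ranges over all odd integers, and the exponents $q^{(k'/2)^2/(4m)}$ match those of $h_{k'/2,m}$. Writing $k'=4m\ell+k$ with $k$ odd and $0\le k\le 4m-1$—legitimate because $4m$ is even, so parity is preserved and the odd residues modulo $4m$ are exactly $1,3,\dots,4m-1$—reindexes the sum over $\ell$ into $h_{k/2,m}(\tau)$ and produces the stated formula. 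I expect this bookkeeping for $g_{j,m}$, namely the correct handling of the $1/2$-shift in the Poisson dual and the passage to the odd residue system modulo $4m$, to be the only genuinely delicate point; everything else is routine once the branch of $(-i\tau)^{1/2}$ is pinned down consistently in both formulas.

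Finally, for $\tau\mapsto\tau+1$ I would substitute directly and factor $e^{2\pi i m(k+a)^2}=e^{2\pi i(mk^2+jk)}e^{\pi i j^2/(2m)}$ out of each summand. Under the hypothesis $j+m\in\mathbb{Z}$ the prefactor $e^{2\pi i(mk^2+jk)}$ equals $1$ for every $k$: if $m\in\mathbb{Z}$ then $j\in\mathbb{Z}$ and both exponentials are trivial, while if $m\in\tfrac12+\mathbb{Z}$ then $j\in\tfrac12+\mathbb{Z}$ and $e^{2\pi i mk^2}e^{2\pi i jk}=(-1)^k(-1)^k=1$. Hence the constant $e^{\pi i j^2/(2m)}$ pulls out of both $h_{j,m}$ and $g_{j,m}$, leaving the original sum and yielding \eqref{hg-tran}.
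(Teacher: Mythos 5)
Your proof is correct. Note that the paper itself gives no proof of this lemma: it is quoted directly from Wakimoto's book (the ``Cf.'' citation), so there is nothing internal to compare against. Your Poisson-summation derivation is the standard one for such theta transformations and it checks out in every detail: the Gaussian Fourier transform $\widehat{g}(\xi)=(-i\tau)^{1/2}(2m)^{-1/2}e^{2\pi i\xi a}q^{\xi^2/(4m)}$ is right (with the branch fixed on the imaginary axis and convergence from $\operatorname{Re}(2\pi im/\tau)>0$); the reduction $n=2m\ell+k$ uses $2m\in\mathbb{N}$ and $j\in\mathbb{Z}$ exactly where the lemma's hypotheses supply them; the half-integer shift for $g_{j,m}$ via $\sum_k e^{\pi ik}f(k)=\sum_n\widehat{f}(n-\tfrac12)$ and the reindexing $k'=4m\ell+k$ over odd residues modulo $4m$ (legitimate since $4m$ is even) reproduce the stated sum; and the $\tau\mapsto\tau+1$ case correctly isolates $e^{2\pi i(mk^2+jk)}=1$ in both parity cases of $j+m\in\mathbb{Z}$. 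The one thing you could make explicit is the justification for applying Poisson summation (the Gaussian is Schwartz for $\operatorname{Im}\tau>0$, so this is immediate), but that is a formality.
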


We also need the following simple fact.
\begin{lemma}\label{lem-action}
Let $N$ be a positive integer. Suppose that a vector-valued function $X(\tau)$ satisfies
\begin{align}\label{eq-lem-X-TS}
X(\tau+1)=\Lambda X(\tau), \quad X\left(-\frac{1}{N\tau}\right)=P X(\tau),
\end{align}
where $\Lambda$ and $P$ are matrices independent of $\tau$. Then we have
\begin{align}
X\left(\frac{\tau}{N\tau+1}\right)=P\Lambda^{-1} PX(\tau).
\end{align}
In particular, for $N=1,2,3,4$, if $X(\tau)$ satisfies \eqref{eq-lem-X-TS}, then $X(\tau)$ is a modular function on $\Gamma_0(N)$.
\end{lemma}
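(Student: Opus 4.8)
The plan is to reduce the modularity of $X$ to its behaviour under two explicit elements of $\Gamma_0(N)$: the translation $T\colon\tau\mapsto\tau+1$ (already controlled by the hypothesis $X(\tau+1)=\Lambda X(\tau)$) and the map $V_N\colon\tau\mapsto\tau/(N\tau+1)$, which is the M\"obius transformation attached to $\left(\begin{smallmatrix}1&0\\N&1\end{smallmatrix}\right)\in\Gamma_0(N)$. Both the displayed identity and the final modularity claim follow once the action of $X$ under $V_N$ is pinned down.

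For the displayed formula, the key observation is the factorization $V_N=S_N\circ T^{-1}\circ S_N$ of M\"obius transformations, where $S_N\colon\tau\mapsto-1/(N\tau)$; this is checked directly by composing the three maps from the inside out. Writing $w:=S_N\tau=-1/(N\tau)$ and using $X(\tau-1)=\Lambda^{-1}X(\tau)$ (a restatement of the first hypothesis), the two hypotheses applied in the order dictated by this factorization give
\begin{align*}
X\!\left(\frac{\tau}{N\tau+1}\right)
&=X\big(S_N(T^{-1}w)\big)=P\,X(T^{-1}w)\\
&=P\Lambda^{-1}X(w)=P\Lambda^{-1}X(S_N\tau)=P\Lambda^{-1}P\,X(\tau),
\end{align*}
which is the asserted transformation $X(\tau/(N\tau+1))=P\Lambda^{-1}P\,X(\tau)$.

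For the modularity statement, I would invoke the group-theoretic fact that for $N\in\{1,2,3,4\}$ the matrices $T$ and $V_N$ generate $\Gamma_0(N)$ modulo $\pm I$. This can be read off from the structure of $\overline{\Gamma_0(N)}:=\Gamma_0(N)/\{\pm I\}$, which has genus $0$ in each of these cases: listing one parabolic generator per cusp together with the single long relation and the elliptic relations reduces the presentation to the two parabolic classes of $T$ (cusp $\infty$) and $V_N$ (cusp $0$). Explicitly one finds $\overline{\Gamma_0(2)}\cong\mathbb{Z}*\mathbb{Z}/2$, $\overline{\Gamma_0(3)}\cong\mathbb{Z}*\mathbb{Z}/3$ and $\overline{\Gamma_0(4)}\cong\mathbb{Z}*\mathbb{Z}$ on these two generators, while $N=1$ is the classical fact that $\left(\begin{smallmatrix}1&1\\0&1\end{smallmatrix}\right)$ and $\left(\begin{smallmatrix}1&0\\1&1\end{smallmatrix}\right)$ generate $\mathrm{SL}(2,\mathbb{Z})$. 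Granting this, the assignment $T\mapsto\Lambda$ and $V_N\mapsto P\Lambda^{-1}P$ extends to all of $\Gamma_0(N)$ by multiplicativity: from $X(\gamma_1(\gamma_2\tau))=M_{\gamma_1}X(\gamma_2\tau)=M_{\gamma_1}M_{\gamma_2}X(\tau)$, any word in $T,V_N$ representing $\gamma$ yields a matrix $M_\gamma$ with $X(\gamma\tau)=M_\gamma X(\tau)$; since $-I$ acts trivially on $\mathbb{H}$ this is consistent, so $X$ is a vector-valued modular function on $\Gamma_0(N)$.

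I expect the main obstacle to be this last group-theoretic step, in particular verifying that the elliptic elements present for $N=2,3$ are themselves words in $T$ and $V_N$ (for instance, for $N=3$ the element $V_3^{-1}T=\left(\begin{smallmatrix}1&1\\-3&-2\end{smallmatrix}\right)$ has order $3$ in $\mathrm{PSL}(2,\mathbb{Z})$), so that no generator beyond $T$ and $V_N$ is needed.
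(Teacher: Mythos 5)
Your proof is correct and follows essentially the same route as the paper: the displayed identity is obtained from the same factorization $\tau/(N\tau+1)=S_N\bigl(T^{-1}(S_N\tau)\bigr)$ with $S_N\colon\tau\mapsto -1/(N\tau)$, and the modularity claim rests on the same fact that for $N=1,2,3,4$ the action of $\Gamma_0(N)$ on the upper half-plane is generated by $\left(\begin{smallmatrix}1&1\\0&1\end{smallmatrix}\right)$ and $\left(\begin{smallmatrix}1&0\\N&1\end{smallmatrix}\right)$. The only difference is that you justify this group-theoretic fact (via the free-product structure of $\overline{\Gamma_0(N)}$ and the observation that the elliptic elements for $N=2,3$ are words in $T$ and $V_N$), whereas the paper simply asserts it.
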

\begin{proof}
We have
\begin{align}\label{X-Gamma}
&X\left(\frac{\tau}{N\tau+1}\right)=X\left(-\frac{1}{-N(1+({N\tau})^{-1})}\right)=P X\left(-1-\frac{1}{N\tau}\right)\nonumber \\
&=P\Lambda^{-1}X\left(-\frac{1}{N\tau}\right)=P\Lambda^{-1}PX(\tau).
\end{align}
For $N=1,2,3,4$, note that the action of $\Gamma_0(N)$ on the upper half complex plane is generated by the action of the matrices $\left(\begin{smallmatrix} 1 & 1 \\ 0 & 1\end{smallmatrix}\right)$ and $\left(\begin{smallmatrix}
    1 & 0 \\ N & 1
\end{smallmatrix}\right)$. Therefore,  if $X(\tau)$ satisfies  \eqref{eq-lem-X-TS}, then $X(\tau)$ is modular on $\Gamma_0(N)$.
\end{proof}

\section{Proofs of Mizuno's formulas}\label{sec-proof-M}

In this section, we provide proofs for the formulas stated in Theorems \ref{thm-M-conj} and \ref{thm-M-Capparelli-conj}.

Before we present the proof of Theorem \ref{thm-M-conj}, we recall some 3-dissection formulas for the function
\begin{align}
F(z;q):=\frac{(q;q)_\infty}{(zq,q/z;q)_\infty}.
\end{align}
It is well-known that $F(z;q)$ is the generating function of the partition cranks. Garvan \cite[Eq.\ (2.15)]{Garvan} proved that for $z=\zeta_9^{j}$ where $j\in \{1,2,4,5,7,8\}$ we have
\begin{align}
F(z;q)&=\frac{J_{6,27}J_{12,27}}{J_{27}} -(1-z+z^2+z^5)q \frac{J_{3,27}J_{12,27}}{J_{27}} \nonumber \\
& \qquad \qquad +(z^2-z-z^4)q^2\frac{J_{3,27}J_{6,27}}{J_{27}}.  \label{F-3-dissection}
\end{align}
Note that Garvan stated \eqref{F-3-dissection} only for $z=\zeta_9$. However, we have checked that following the same arguments in \cite[Section 3]{Garvan}, it is easy to show that \eqref{F-3-dissection} actually holds for $z$ being any primitive 9-th root of unity.

\begin{proof}[Proof of Theorem \ref{thm-M-conj}]
The formula \eqref{X-KR-T} is trivial.

We denote the $i$-th component of $X(\tau)$ as $X_{i}(\tau) (i=1,2,3)$.
In terms of the generalized Dedekind eta functions \eqref{def-Geta}, we have
\begin{align}
X_1(\tau)= \frac{1}{E_{1,0}^{(9)}(9\tau)E_{3,0}^{(9)}(9\tau)}, ~
X_2(\tau) = \frac{1}{E_{2,0}^{(9)}(9\tau)E_{3,0}^{(9)}(9\tau)}, ~ X_3(\tau)  = \frac{1}{E_{3,0}^{(9)}(9\tau)E_{4,0}^{(9)}(9\tau)}.
\end{align}
Applying Lemma \ref{Geta-tran}, we deduce that
\begin{align}
X_1\Big(-\frac{1}{\tau}\Big) = & \frac{1}{E_{1,0}^{(9)}(-\frac{9}{\tau})E_{3,0}^{(9)}(-\frac{9}{\tau})} = e^{\frac{5}{9}\pi i}\frac{1}{E_{0,-1}^{(9)}(\frac{\tau}{9})E_{0,-3}^{(9)}(\frac{\tau}{9})}, \label{trantry-1} \\
X_2\Big(-\frac{1}{\tau}\Big) = & \frac{1}{E_{2,0}^{(9)}(-\frac{9}{\tau})E_{3,0}^{(9)}(-\frac{9}{\tau})} = e^{\frac{4}{9}\pi i}\frac{1}{E_{0,-2}^{(9)}(\frac{\tau}{9})E_{0,-3}^{(9)}(\frac{\tau}{9})},  \\
X_3\Big(-\frac{1}{\tau}\Big) = & \frac{1}{E_{3,0}^{(9)}(-\frac{9}{\tau})E_{4,0}^{(9)}(-\frac{9}{\tau})} = e^{\frac{2}{9}\pi i}\frac{1}{E_{0,-3}^{(9)}(\frac{\tau}{9})E_{0,-4}^{(9)}(\frac{\tau}{9})}.
\end{align}
By \eqref{trantry-1} we have
\begin{align}
&X_1\left(-\frac{1}{9\tau}\right)=e^{5\pi i/9}q^{-1/6}\times \frac{1}{(\zeta_9^{-1},\zeta_9q,\zeta_3^{-1},\zeta_3q;q)_\infty} \nonumber \\
&=q^{-1/6}\frac{\zeta_{18}^5}{(1-\zeta_9^{-1})(1-\zeta_3^{-1})} \times \frac{(q;q)_\infty}{(\zeta_9^{-1}q,\zeta_9q;q)_\infty (\zeta_3^{-1}q,\zeta_3q,q;q)_\infty} \nonumber \\
&=q^{-1/6}\frac{\zeta_{18}^5}{(1-\zeta_9^{-1})(1-\zeta_3^{-1})(q^3;q^3)_\infty}F(\zeta_9;q) \nonumber \\
&=a_{11}X_1(3\tau)+a_{12}X_2(3\tau)+a_{13}X_3(3\tau), \quad \text{(by \eqref{F-3-dissection})}
\end{align}
where
\begin{equation}\label{a1-value}
\begin{split}
a_{11}=\frac{\zeta_{18}^5}{(1-\zeta_9^{-1})(1-\zeta_3^{-1})}=\frac{1}{2\sqrt{3}\sin \frac{\pi}{9}}=\alpha_1, \\
a_{12}=-\frac{\zeta_{18}^5(1-\zeta_9+\zeta_9^2+\zeta_9^5)}{(1-\zeta_9^{-1})(1-\zeta_3^{-1})}=\frac{1}{2\sqrt{3}\sin \frac{2\pi}{9}}=\alpha_2,\\
a_{13}=\frac{\zeta_{18}^5(\zeta_9^2-\zeta_9-\zeta_9^4)}{(1-\zeta_9^{-1})(1-\zeta_3^{-1})}=\frac{1}{2\sqrt{3}\sin \frac{4\pi}{9}}=\alpha_4.
\end{split}
\end{equation}
Similarly,
\begin{align}
&X_2\left(-\frac{1}{9\tau}\right)=e^{4\pi i/9}q^{-1/6}\times \frac{1}{(\zeta_9^{-2},\zeta_9^2q,\zeta_3^{-1},\zeta_3q;q)_\infty} \nonumber \\
&=\frac{\zeta_{9}^2}{(1-\zeta_9^{-2})(1-\zeta_3^{-1})(q^3;q^3)_\infty}q^{-1/6}F(\zeta_9^2;q) \nonumber \\
&=a_{21}X_1(3\tau)+a_{22}X_2(3\tau)+a_{23}X_3(3\tau), \quad \text{(by \eqref{F-3-dissection})}
\end{align}
where
\begin{equation}\label{a2-value}
\begin{split}
a_{21}=\frac{\zeta_{9}^2}{(1-\zeta_9^{-2})(1-\zeta_3^{-1})}=\frac{1}{2\sqrt{3}\sin \frac{2\pi}{9}}=\alpha_2,  \\
a_{22}=-\frac{\zeta_{9}^2(1-\zeta_9^2+\zeta_9^4+\zeta_9^{10})}{(1-\zeta_9^{-2})(1-\zeta_3^{-1})}=-\frac{1}{2\sqrt{3}\sin \frac{4\pi}{9}}=-\alpha_4,\\
a_{23}=\frac{\zeta_{9}^2(\zeta_9^4-\zeta_9^2-\zeta_9^8)}{(1-\zeta_9^{-2})(1-\zeta_3^{-1})}=-\frac{1}{2\sqrt{3}\sin \frac{\pi}{9}}=-\alpha_1.
\end{split}
\end{equation}
In the same way,
\begin{align}
&X_3\left(-\frac{1}{9\tau}\right)=e^{2\pi i/9}q^{-1/6}\times \frac{1}{(\zeta_9^{-4},\zeta_9^4q,\zeta_3^{-1},\zeta_3q;q)_\infty}  \nonumber \\
&=q^{-1/6}\frac{\zeta_9}{(1-\zeta_9^{-4})(1-\zeta_3^{-1})(q^3;q^3)_\infty}F(\zeta_9^4;q) \nonumber \\
&=a_{31}X_1(3\tau)+a_{32}X_2(3\tau)+a_{33}X_3(3\tau), \quad \text{(by \eqref{F-3-dissection})}
\end{align}
where
\begin{equation}\label{a3-value}
\begin{split}
a_{31}=\frac{\zeta_9}{(1-\zeta_9^{-4})(1-\zeta_3^{-1})}=\frac{1}{2\sqrt{3}\sin \frac{4\pi}{9}}=\alpha_4, \\
a_{32}=-\frac{\zeta_9(1-\zeta_9^4+\zeta_9^8+\zeta_9^{20})}{(1-\zeta_9^{-4})(1-\zeta_3^{-1})}=-\frac{1}{2\sqrt{3}\sin \frac{\pi}{9}}=-\alpha_1,\\
a_{33}=\frac{\zeta_9(\zeta_9^8-\zeta_9^4-\zeta_9^{16})}{(1-\zeta_9^{-4})(1-\zeta_3^{-1})}=\frac{1}{2\sqrt{3}\sin \frac{2\pi}{9}}=\alpha_2.
\end{split}
\end{equation}
This proves \eqref{goal}.

Now by Lemma \ref{lem-action} we know that $X(\tau)$ is a modular function on $\Gamma_0(3)$.
\end{proof}

\begin{rem}
The last second equalities in each of the equations in \eqref{a1-value}--\eqref{a3-value} can be achieved by direct calculations. Alternatively, we give a way to verify it. Take the first equation in \eqref{a1-value} as an example. Note that
$$e^{k \pi i/n}=\cos \frac{k\pi}{n}+i\sin \frac{k\pi }{n}, \quad \zeta_9^3-\zeta_9^{-3}=\sqrt{3}i.$$
We see that the last second equality in the first equation in \eqref{a1-value} is equivalent to
\begin{align}
\frac{\zeta_{18}^{5}}{(1-\zeta_9^{-1})(1-\zeta_9^{-3})}=-\frac{1}{(\zeta_9^3-\zeta_9^{-3})(\zeta_9^4-\zeta_9^{-4})}.
\end{align}
Canceling common factors, we see that this is equivalent to
\begin{align}
\zeta_{18}^{11}(1-\zeta_9^8)=(1-\zeta_9), \quad \text{i.e.} \quad \zeta_{18}^9=-1.
\end{align}
The last equality is trivial.
\end{rem}
\begin{rem}
After this paper has been written,  we heard from Yuma Mizuno that he \cite{Mizuno-note}  has independently found another proof for Theorem \ref{thm-M-conj}. His proof is based on the Macdonald identity of type $A_2$ and some transformation formulas of binary theta series.
\end{rem}

\begin{proof}[Proof of Theorem \ref{thm-M-Capparelli-conj}]
The formula \eqref{X-Capparelli-T} is trivial.

We denote the $i$-th component of $X(\tau)$ by $X_i(\tau)$ ($i=1,2$). From \eqref{eq-Capparelli-1} and \eqref{eq-Capparelli-2} we have
\begin{align}
X_1(\tau)=\frac{\eta(4\tau)\eta^2(6\tau)}{\eta(2\tau)\eta(3\tau)\eta(12\tau)}, \quad X_2(\tau)=\frac{\eta^2(2\tau)\eta(12\tau)}{\eta(\tau)\eta(4\tau)\eta(6\tau)}.
\end{align}
Using \eqref{eta-tran} we deduce that
\begin{align}\label{g1-S}
X_1\left(-\frac{1}{\tau}\right)&=\frac{\eta(-4/\tau)\eta^2(-6/\tau)}{\eta(-2/\tau)\eta(-3/\tau)\eta(-12/\tau)}
=\frac{1}{\sqrt{2}}\frac{\eta(\tau/4)\eta^2(\tau/6)}{\eta(\tau/2)\eta(\tau/3)\eta(\tau/12)}.
\end{align}
Similarly,
\begin{align}\label{g2-S}
X_2\left(-\frac{1}{\tau}\right)&=\frac{\eta^2(-2/\tau)\eta(-12/\tau)}{\eta(-1/\tau)\eta(-4/\tau)\eta(-6/\tau)}
=\frac{1}{\sqrt{2}}\frac{\eta^2(\tau/2)\eta(\tau/12)}{\eta(\tau)\eta(\tau/4)\eta(\tau/6)}.
\end{align}
Using the method in \cite{Garvan-Liang}, it is easy to verify that
\begin{align}
\frac{\eta(\tau/4)\eta^2(\tau/6)}{\eta(\tau/2)\eta(\tau/3)\eta(\tau/12)}=\frac{\eta(4\tau/3)\eta^2(2\tau)}{\eta(2\tau/3)\eta(\tau)\eta(4\tau)}
+\frac{\eta^2(2\tau/3)\eta(4\tau)}{\eta(\tau/3)\eta(4\tau/3)\eta(2\tau)}, \label{id-1} \\
\frac{\eta^2(\tau/2)\eta(\tau/12)}{\eta(\tau)\eta(\tau/4)\eta(\tau/6)}=\frac{\eta(4\tau/3)\eta^2(2\tau)}{\eta(2\tau/3)\eta(\tau)\eta(4\tau)}
-\frac{\eta^2(2\tau/3)\eta(4\tau)}{\eta(\tau/3)\eta(4\tau/3)\eta(2\tau)}. \label{id-2}
\end{align}
This proves \eqref{X-Capparelli-S}.

Now by Lemma \ref{lem-action} we know that $X(\tau)$ is a modular function on $\Gamma_0(3)$.
\end{proof}
\begin{rem}
The identities \eqref{id-1} and \eqref{id-2} can also be stated as
\begin{align}
\frac{J_3}{J_1}=\frac{J_4J_6J_{16}J_{24}^2}{J_2^2J_8J_{12}J_{48}}+q\frac{J_4J_6J_8^2J_{48}}{J_2^2J_4J_{16}J_{24}}, \label{J-id-1} \\
\frac{J_1}{J_3}=\frac{J_2J_{12}J_{16}J_{24}^2}{J_6^2J_8J_{12}J_{48}}-q\frac{J_2J_{12}J_8^2J_{48}}{J_4J_6^2J_{16}J_{24}}. \label{J-id-2}
\end{align}
This gives 2-dissection formulas for the infinite products in the left sides.
\end{rem}

\section{Proofs of Theorems \ref{thm-G-o}--\ref{thm-G-e} and Corollaries \ref{cor-AG}--\ref{cor-B}}   \label{sec-theta}

We first establish some transformation formulas for the theta functions $g_{j,k}(\tau)$ and $h_{j,k}(\tau)$ defined in \eqref{g-defn} and \eqref{hg}.
\begin{lemma}  \label{lem-g-new-tran}
For $k\geq 2$ and $0\leq j\leq k$, we have
\begin{equation}
g_{j,k}\big(-\frac{1}{4\tau}\big) =
\begin{footnotesize}
\begin{cases}
\frac{2(-2i\tau)^{\frac{1}{2}}}{\sqrt{k}} \sum\limits_{0\leq l\leq \frac{k-2}{2}}\cos \frac{(2l+1)j\pi}{2k}h_{2l+1,k}(\tau) & \text{k even, j even}, \\
\frac{2(-2i\tau)^{\frac{1}{2}}}{\sqrt{k}} \sum\limits_{0\leq l\leq \frac{k-2}{2}}\cos \frac{(2l+1)j\pi}{2k}g_{2l+1,k}(\tau) & \text{k even, j odd},  \\
\frac{2(-2i\tau)^{\frac{1}{2}}}{\sqrt{k}} \Big(\sum\limits_{0\leq l\leq \frac{k-3}{2}}\cos \frac{(2l+1)j\pi}{2k}h_{2l+1,k}(\tau) + \frac{1}{2}\cos \frac{j\pi}{2}h_{k,k}(\tau) \Big)  & \text{k odd, j even}, \\
\frac{2(-2i\tau)^{\frac{1}{2}}}{\sqrt{k}} \sum\limits_{0\leq l\leq \frac{k-3}{2}}\cos \frac{(2l+1)j\pi}{2k}g_{2l+1,k}(\tau) & \text{k odd, j odd}.
\end{cases}
\end{footnotesize}
\end{equation}
\end{lemma}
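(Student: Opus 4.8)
**

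The goal is to prove Lemma~\ref{lem-g-new-tran}, which computes $g_{j,k}(-1/(4\tau))$ as a linear combination of theta series at the argument $\tau$. The plan is to reduce everything to the single master transformation for $g_{j,m}$ already recorded in Lemma~\ref{lem-modular}, and then to extract and pair the summands using the periodicity relations in \eqref{g-h-period}. First I would start from the formula
\begin{align*}
g_{j,m}\left(-\frac{1}{\tau}\right)=\frac{(-i\tau)^{1/2}}{\sqrt{2m}}\sum_{\substack{0\leq \ell\leq 4m-1\\ \ell\ \mathrm{odd}}}e^{\frac{\pi i j\ell}{2m}}h_{\ell/2,m}(\tau)
\end{align*}
and apply it with $m=k$ but with the variable $\tau$ replaced by $4\tau$, so that $-1/\tau$ becomes $-1/(4\tau)$. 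This immediately produces the prefactor $(-4i\tau)^{1/2}/\sqrt{2k}=2(-2i\tau)^{1/2}/(2\sqrt{k})$ up to the rescaling I still have to track, together with a sum over odd $\ell$ of terms $e^{\pi i j\ell/2k}h_{\ell/2,k}(4\tau)$. The key computational simplification is that $h_{\ell/2,k}(4\tau)$ must be rewritten at argument $\tau$; here I would invoke the doubling relations \eqref{hg-double}, applied appropriately, to convert the argument $4\tau$ back to $\tau$ while changing the index and modulus in a controlled way.

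The heart of the argument is the pairing of the summand indexed by $\ell$ with the summand indexed by $4m-\ell$ (equivalently, replacing $\ell$ by its reflection). Since $\ell$ runs over odd residues mod $4k$, the map $\ell\mapsto 4k-\ell$ is an involution on this index set, and under it the exponential $e^{\pi i j\ell/2k}$ pairs with $e^{\pi i j(4k-\ell)/2k}=e^{2\pi i j}e^{-\pi i j\ell/2k}$. Because $j$ is an integer, $e^{2\pi i j}=1$, so the two exponentials combine into $2\cos(\pi j\ell/2k)$ (after writing $\ell/2$ in terms of a new index $l$ with $\ell=2l+1$). Simultaneously I would use the symmetry relations $h_{j,m}=h_{-j,m}=h_{2m+j,m}$ and $g_{j,m}=g_{-j,m}=-g_{2m+j,m}$ from \eqref{g-h-period} to fold the reflected theta index back onto the original one. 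This folding is precisely what produces an $h$-series when the relevant sign is $+1$ and a $g$-series when it is $-1$; tracking that sign carefully as a function of the parities of $j$ and $k$ is what generates the four cases in the statement.

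The main obstacle, and the step deserving the most care, is the bookkeeping of parities and the boundary terms. The four cases ($k$ even or odd, crossed with $j$ even or odd) arise because the sign picked up under the reflection $\ell\mapsto 4k-\ell$ together with the periodicity relation depends on these parities, and on whether the fixed (or near-fixed) index of the involution contributes a genuine term. When $k$ is odd there is an index value where $2l+1=k$, i.e.\ a self-paired or half-weighted term, which is exactly the source of the isolated summand $\tfrac12\cos\frac{j\pi}{2}\,h_{k,k}(\tau)$ appearing in the third case. I expect the delicate points to be (i) verifying that the reflection involution has no spurious fixed points that would double-count, (ii) correctly assigning the half-weight to the boundary index when $k$ is odd, and (iii) confirming that for $j$ odd the combination collapses to $g_{2l+1,k}$ rather than $h_{2l+1,k}$ because the relevant sign in \eqref{g-h-period} flips. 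Once these parity computations are organized cleanly, each of the four cases follows by the same reflect-and-fold mechanism, and the uniform prefactor $2(-2i\tau)^{1/2}/\sqrt{k}$ emerges in all of them.
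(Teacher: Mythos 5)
Your proposal is correct and follows essentially the same route as the paper: apply Lemma \ref{lem-modular} with the argument $4\tau$, use \eqref{hg-double} to return to argument $\tau$, pair each odd index $\ell$ with $4k-\ell$ to produce the cosines, and treat the self-paired index $2l+1=k$ separately when $k$ is odd, which yields the $\tfrac{1}{2}\cos\frac{j\pi}{2}h_{k,k}(\tau)$ term. The one point to tighten is that the final collapse to $h_{2l+1,k}$ (for $j$ even) or $g_{2l+1,k}$ (for $j$ odd) is effected not by the periodicity relations \eqref{g-h-period} alone but by \eqref{h-h-change} and \eqref{g-h-change}, which identify the combinations $h_{4l+2,4k}\pm h_{4k-4l-2,4k}$ arising from a second reflection $l\mapsto k-1-l$, the sign $(-1)^{j}$ there coming from the cosine rather than from \eqref{g-h-period}.
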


\begin{proof}
From Lemma \ref{lem-modular} and \eqref{g-h-period}, \eqref{g-h-change}, we have that
\begin{align}
& g_{j,k}\left(-\frac{1}{4\tau}\right) =  \frac{(-2i\tau)^{\frac{1}{2}}}{\sqrt{k}} \sum_{\substack{0 \leq l \leq 4k-1 \\ l~\text{odd} }} e^{\frac{\pi i l j}{2k}}h_{\frac{l}{2},k}(4\tau)  \nonumber \\
& = \frac{(-2i\tau)^{\frac{1}{2}}}{\sqrt{k}} \sum_{0\leq l \leq 2k-1} e^{\frac{2l+1}{2k}\pi ij}h_{\frac{2l+1}{2},k}(4\tau) \nonumber \\
& = \frac{(-2i\tau)^{\frac{1}{2}}}{\sqrt{k}} \sum_{0\leq l \leq k-1} \Big( e^{\frac{2l+1}{2k}\pi ij}h_{\frac{2l+1}{2},k}(4\tau) + e^{\frac{4k-(2l+1)}{2k}\pi i j}h_{\frac{4k-(2l+1)}{2},k}(4\tau)\Big) \nonumber \\
& = 2\frac{(-2i\tau)^{\frac{1}{2}}}{\sqrt{k}}\sum_{0\leq l \leq k-1} \cos \frac{(2l+1)j\pi}{2k} h_{\frac{2l+1}{2},k}(4\tau).   \label{A-4}
\end{align}
Note that the parity of $k$ and $t$ will affect the subsequent calculations. So we split our proof into four cases. \\
\textbf{Case 1:} If $k$ is even, then by \eqref{A-4} and \eqref{g-h-change},
\begin{align*}
& g_{j,k}\left(-\frac{1}{4\tau}\right) = \frac{2(-2i\tau)^{\frac{1}{2}}}{\sqrt{k}}  \bigg(\sum_{0\leq l \leq \frac{k-2}{2}} \Big(\cos \frac{(2l+1)j\pi}{2k}h_{4l+2,4k}(\tau) \nonumber \\
& \quad + \cos \frac{(2k-1-2l)j\pi}{2k}h_{4k-4l-2,4k}(\tau) \Big) \bigg)  \nonumber \\
& = \frac{2(-2i\tau)^{\frac{1}{2}}}{\sqrt{k}}\Big(\sum_{0\leq l \leq \frac{k-2}{2}} \cos \frac{(2l+1)j\pi}{2k}\big(h_{4l+2,4k}(\tau) +(-1)^j h_{4k-4l-2,4k}(\tau) \big) \Big).
\end{align*}
In particular, when $j$ is even (resp.\ odd), we obtain the first (resp.\ second) case using \eqref{h-h-change} (resp.\ \eqref{g-h-change}).

\textbf{Case 2:} If $k$ is odd, then by \eqref{A-4} and \eqref{h-h-change},
\begin{align*}
& g_{j,k}\left(-\frac{1}{4\tau}\right) = \frac{2(-2i\tau)^{\frac{1}{2}}}{\sqrt{k}}  \bigg(\sum_{0\leq l \leq \frac{k-3}{2}} \Big(\cos \frac{(2l+1)j\pi}{2k}h_{4l+2,4k}(\tau)  \nonumber \\
& \quad +  \cos \frac{(2k-1-2l)j\pi}{2k}h_{4k-4l-2,4k}(\tau) \Big) + \cos \frac{j\pi}{2}h_{2k,4k}(\tau) \bigg)  \nonumber \\
& = \frac{2(-2i\tau)^{\frac{1}{2}}}{\sqrt{k}}\Big(\sum_{0\leq l \leq \frac{k-3}{2}} \cos \frac{(2l+1)j\pi}{2k}\big(h_{4l+2,4k}(\tau) + (-1)^j h_{4k-4l-2,4k}(\tau) \big)  \nonumber \\
&  \quad + \frac{1}{2}\cos \frac{j\pi}{2}h_{k,k}(\tau) \Big).
\end{align*}
In particular, when $j$ is even (resp.\ odd), we obtain the third (resp.\ fourth) case using \eqref{h-h-change} (resp.\ \eqref{g-h-change}).
\end{proof}

\begin{lemma}\label{lem-h-new-tran}
For $k\geq 2$ and $0\leq j\leq k$, we have
\begin{equation}
h_{j,k}\big(-\frac{1}{4\tau}\big) =
\begin{footnotesize}
\begin{cases}
\frac{2(-2i\tau)^{\frac{1}{2}}}{\sqrt{k}} \Big(\sum\limits_{1\leq l\leq \frac{k-2}{2}}\cos \frac{lj\pi}{k}h_{2l,k}(\tau) + \frac{1}{2}h_{0,k}(\tau)+\frac{1}{2}\cos \frac{j\pi}{2}h_{k,k}(\tau)\Big) & \text{k even, j even}, \\
\frac{2(-2i\tau)^{\frac{1}{2}}}{\sqrt{k}} \Big(\sum\limits_{1\leq l\leq \frac{k-2}{2}}\cos \frac{lj\pi}{k}g_{2l,k}(\tau)+\frac{1}{2}g_{0,k}(\tau)\Big) & \text{k even, j odd},  \\
\frac{2(-2i\tau)^{\frac{1}{2}}}{\sqrt{k}} \Big(\sum\limits_{1\leq l\leq \frac{k-1}{2}}\cos \frac{lj\pi}{k}h_{2l,k}(\tau) + \frac{1}{2}h_{0,k}(\tau) \Big)  & \text{k odd, j even}, \\
\frac{2(-2i\tau)^{\frac{1}{2}}}{\sqrt{k}} \Big(\sum\limits_{1\leq l\leq \frac{k-1}{2}}\cos \frac{lj\pi}{k}g_{2l,k}(\tau)+\frac{1}{2}g_{0,k}(\tau)\Big) & \text{k odd, j odd}.
\end{cases}
\end{footnotesize}
\end{equation}
\end{lemma}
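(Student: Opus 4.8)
The plan is to follow the template of the proof of Lemma \ref{lem-g-new-tran}, but starting from the $h$-transformation in Lemma \ref{lem-modular} rather than the $g$-transformation. Specializing that formula to modulus $k$ and argument $4\tau$, and simplifying the prefactor $(-4i\tau)^{1/2}/\sqrt{2k}$, I would first obtain
\[
h_{j,k}\Big(-\frac{1}{4\tau}\Big) = \frac{(-2i\tau)^{1/2}}{\sqrt{k}}\sum_{0\le l\le 2k-1} e^{\frac{\pi i jl}{k}} h_{l,k}(4\tau).
\]
I would then replace each $h_{l,k}(4\tau)$ by $h_{4l,4k}(\tau)$ using \eqref{hg-double} twice, so that every term carries modulus $4k$ and argument $\tau$. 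Note the contrast with the $g$-case: there the lower indices are half-integers and land on residues $\equiv 2 \pmod 4$ after doubling, whereas here the integer indices land on residues $\equiv 0 \pmod 4$, which is exactly what will produce the extra boundary terms in the statement.

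The next step is two successive pairings. First I would pair the summation index $l$ with $2k-l$; since $j\in\mathbb{Z}$ and $h_{4(2k-l),4k}(\tau)=h_{4l,4k}(\tau)$ by the periodicity and evenness in \eqref{g-h-period}, each pair collapses to $2\cos\frac{jl\pi}{k}\,h_{4l,4k}(\tau)$, with $l=0$ and $l=k$ as fixed points; this produces the global factor $2$ and the cosine coefficients in the statement. Second, I would pair $l$ with $k-l$; using $\cos\frac{j(k-l)\pi}{k}=(-1)^j\cos\frac{jl\pi}{k}$ together with \eqref{h-h-change} and \eqref{g-h-change} applied to the complementary indices $4l$ and $4k-4l$, each such pair combines into $\cos\frac{jl\pi}{k}\,h_{2l,k}(\tau)$ when $j$ is even and into $\cos\frac{jl\pi}{k}\,g_{2l,k}(\tau)$ when $j$ is odd. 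This is precisely what separates the result into its $h$-valued and $g$-valued cases.

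The remaining work, and the only genuinely delicate point, is the careful accounting of the fixed-point and midpoint terms, whose occurrence depends on the parities of $k$ and $j$. The $l=0$ and $l=k$ fixed points of the first pairing assemble, via \eqref{h-h-change} and \eqref{g-h-change} with lower index $0$, into $h_{0,k}(\tau)$ for $j$ even and $g_{0,k}(\tau)$ for $j$ odd; after normalizing the prefactor to $\frac{2(-2i\tau)^{1/2}}{\sqrt{k}}$ these furnish the boundary term $\frac12 h_{0,k}(\tau)$ (resp.\ $\frac12 g_{0,k}(\tau)$) appearing in all four cases, a feature absent from the $g$-analogue where $l$ ranges over half-integers. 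The second pairing has a midpoint $l=k/2$ only when $k$ is even; there $h_{2k,4k}(\tau)=\frac12 h_{k,k}(\tau)$, again by \eqref{h-h-change}, and after normalization this contributes $\frac12\cos\frac{j\pi}{2}h_{k,k}(\tau)$, which survives only for $j$ even since $\cos\frac{j\pi}{2}=0$ for $j$ odd. Matching these contributions against the four displayed cases, and confirming that the surviving pair ranges are $1\le l\le\frac{k-2}{2}$ for $k$ even and $1\le l\le\frac{k-1}{2}$ for $k$ odd, completes the proof. I expect this bookkeeping of the degenerate terms, rather than any new idea, to be the main obstacle.
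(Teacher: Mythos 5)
Your proposal is correct and follows essentially the same route as the paper: the same specialization of Lemma \ref{lem-modular}, the same two pairings $l\leftrightarrow 2k-l$ and $l\leftrightarrow k-l$, the same use of \eqref{h-h-change} and \eqref{g-h-change} to separate the $j$ even/odd cases, and the same treatment of the boundary terms $h_{0,k}$, $g_{0,k}$ and $h_{k,k}$. Your bookkeeping of the fixed points is in fact slightly cleaner than the paper's intermediate display, which pairs $l$ with $2k-1-l$ in a way that only the final line of \eqref{h-4-1} makes consistent.
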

\begin{proof}
From Lemma \ref{lem-modular} and \eqref{g-h-period}, \eqref{hg-double}, we have that
\begin{align}
& h_{j,k}\left(-\frac{1}{4\tau}\right) = \frac{(-2i\tau)^{\frac{1}{2}}}{\sqrt{k}}\sum_{0\leq l\leq 2k-1}e^{\frac{\pi ijl}{k}}h_{l,k}(4\tau) \nonumber \\
& = \frac{(-2i\tau)^{\frac{1}{2}}}{\sqrt{k}} \sum_{0\leq l\leq k-1}\Big(e^{\frac{\pi ijl}{k}}h_{l,k}(4\tau)+e^{\frac{2k-1-l}{k}\pi ij}h_{2k-1-l,k}(4\tau)\Big) \nonumber \\
& = \frac{(-2i\tau)^{\frac{1}{2}}}{\sqrt{k}}\Big(2\sum_{1\leq l\leq k-1}\cos \frac{lj\pi}{k}h_{l,k}(4\tau)+h_{0,k}(4\tau)+(-1)^{j} h_{k,k}(4\tau)\Big).  \label{h-4-1}
\end{align}
Similar to the proof of Lemma \ref{lem-g-new-tran}, we split our proof into four cases.\\
\textbf{Case 1:} If $k$ is even, then by \eqref{h-4-1} and \eqref{g-h-period},
\begin{align*}
& h_{j,k}\left(-\frac{1}{4\tau}\right) = \frac{2(-2i\tau)^{\frac{1}{2}}}{\sqrt{k}}\Big( \sum_{1\leq l\leq \frac{k-2}{2}}\big( \cos\frac{lj\pi}{k}h_{4l,4k}(\tau)+\cos\frac{(k-l)j\pi}{k}h_{4k-4l,4k}(\tau) \big) \nonumber \\
& \quad +\cos \frac{j\pi}{2}h_{2k,4k}(\tau) +\frac{1}{2}\big( h_{0,4k}(\tau)+(-1)^{j} h_{4k,4k}(\tau)\big)\Big) \nonumber \\
& = \frac{2(-2i\tau)^{\frac{1}{2}}}{\sqrt{k}}\Big( \sum_{1\leq l\leq \frac{k-2}{2}}\cos \frac{lj\pi}{k}\big( h_{4l,4k}(\tau)+(-1)^{j}h_{4k-4l,4k}(\tau) \big) +\cos \frac{j\pi}{2}h_{2k,4k}(\tau) \nonumber \\
& \quad + \frac{1}{2}\big(  h_{0,4k}(\tau)+(-1)^{j} h_{4k,4k}(\tau)\big) \Big).
\end{align*}
In particular, when $j$ is even (resp. odd), we deduce the first (resp. second) case using \ref{h-h-change} (resp. \eqref{g-h-change}). \\
\textbf{Case 2:} If $k$ is odd, then by \eqref{h-4-1} and \eqref{g-h-period},
\begin{align*}
& h_{j,k}\left(-\frac{1}{4\tau}\right) = \frac{2(-2i\tau)^{\frac{1}{2}}}{\sqrt{k}}\Big( \sum_{1\leq l\leq \frac{k-1}{2}}\big( \cos\frac{lj\pi}{k}h_{4l,4k}(\tau)+\cos\frac{(k-l)j\pi}{k}h_{4k-4l,4k}(\tau) \big) \nonumber \\
& \quad +\frac{1}{2}\big( h_{0,4k}(\tau)+(-1)^{j} h_{4k,4k}(\tau)\big)\Big) \nonumber \\
& = \frac{2(-2i\tau)^{\frac{1}{2}}}{\sqrt{k}}\Big( \sum_{1\leq l\leq \frac{k-1}{2}}\cos \frac{lj\pi}{k}\big( h_{4l,4k}(\tau)+(-1)^{j}h_{4k-4l,4k}(\tau) \big) \nonumber \\
& \quad + \frac{1}{2}\big(  h_{0,4k}(\tau)+(-1)^{j} h_{4k,4k}(\tau)\big) \Big).
\end{align*}
In particular, when $j$ is even (resp. odd), we deduce the third (resp. fourth) case using \ref{h-h-change} (resp. \eqref{g-h-change}).
\end{proof}

Now we are ready to prove Theorems \ref{thm-G-o}--\ref{thm-G-e} and their corollaries.

\begin{proof}[Proof of Theorem \ref{thm-G-o}]
The first formula in \eqref{Gk-odd-tran} is trivial, and the second formula follows from Lemma \ref{lem-g-new-tran}. From \eqref{Gk-odd-tran} we have
\begin{align*}
&G_{k}^{(1)}\left(\frac{\tau}{4\tau+1}\right)  = \frac{2}{\sqrt{k}} \times \left(\frac{4\tau+1}{2\tau}i\right)^{\frac{1}{2}}
A_k G_{k}^{(1)}\left(-1-\frac{1}{4\tau}\right) \\
& = \frac{2}{\sqrt{k}} \times \left(\frac{4\tau+1}{2\tau}i\right)^{\frac{1}{2}}
A_k \Lambda_k^{-1}G_{k}^{(1)}\left(-\frac{1}{4\tau}\right) \\
 & =  \frac{4\sqrt{4\tau+1}}{k}A_k\Lambda_k^{-1}A_kG_k^{(1)}(\tau). \qedhere
\end{align*}
\end{proof}

\begin{proof}[Proof of Theorem \ref{thm-G-e}]
The formula \eqref{G_k-1} follows directly from \eqref{hg-tran}.
Note that from Lemma \ref{lem-g-new-tran}, we have
\begin{align}
G_{k}^{(0)}\left(\frac{\tau}{4\tau+1}\right) & = \frac{2}{\sqrt{k}} \times \left(\frac{4\tau+1}{2\tau}i\right)^{\frac{1}{2}}
B_k H_{k}^{(1)}\left(-1-\frac{1}{4\tau}\right), \label{pf-1}
\end{align}
where $H_{k}^{(1)}(\tau):= \big(h_{1,k}(\tau), h_{3,k}(\tau),\dots, h_{\widetilde{t}_{k},k}(\tau)\big)^{\mathrm{T}}$.
Next, by \eqref{hg-tran} and Lemma \ref{lem-h-new-tran}, we deduce that
\begin{align}
& H_{k}^{(1)}\left(-1-\frac{1}{4\tau}\right) = \diag(e^{-\frac{1}{2k}\pi i}, e^{-\frac{9}{2k}\pi i}, \dots, e^{-\frac{\widetilde{t}_{k}^2}{2k}\pi i}) H_{k}^{(1)}\left(-\frac{1}{4\tau}\right) \nonumber \\
& =  \frac{2(-2i\tau)^{\frac{1}{2}}}{\sqrt{k}}\widetilde{\Lambda}_{k}
C_k
G_{k}^{(0)}(\tau). \label{pf-2}
\end{align}
Substituting \eqref{pf-2} into \eqref{pf-1}, we obtain \eqref{G_k-2}.
\end{proof}

\begin{proof}[Proof of Corollary \ref{cor-AG}]
The formula \eqref{AG-T} follows from \eqref{eta-tran} and \eqref{hg-tran}. To prove \eqref{AG-S}, note that
\begin{align}
X_{2k+1}(\tau)=\frac{1}{\eta(\tau)}\Big(g_{1,2k+1}\left(\frac{\tau}{2}\right),\dots,g_{2k-1,2k+1}\left(\frac{\tau}{2}\right)\Big)^\mathrm{T}=\frac{1}{\eta(\tau)}G_{2k+1}^{(1)}\left(\frac{\tau}{2}\right).
\end{align}
Therefore, by Theorem  \ref{thm-G-o} we deduce that
\begin{align}
&X_{2k+1}\left(-\frac{1}{\tau} \right)=\frac{1}{\eta(-1/\tau)}G_{2k+1}^{(1)}\left(-\frac{1}{2\tau}\right)\nonumber \\
&=\frac{1}{\sqrt{-i\tau}\eta(\tau)}\cdot \frac{2\sqrt{-i\tau}}{\sqrt{2k+1}}A_{2k+1}G_{2k+1}^{(1)}\left(\frac{\tau}{2}\right)=\frac{2}{\sqrt{2k+1}}A_{2k+1}X_{2k+1}(\tau). \qedhere
\end{align}
\end{proof}

\begin{proof}[Proof of Corollary \ref{cor-B}]
Note that
\begin{equation}
X_{2k}^{(0)}(\tau) = \frac{1}{\eta(\tau)}G_k^{(0)}(\tau), \quad X_{2k}^{(1)}(\tau) = \frac{1}{\eta(\tau)}G_k^{(1)}(\tau).
\end{equation}
The desired assertions follow from Theorems \ref{thm-G-o}--\ref{thm-G-e} and  \eqref{eta-tran}.
\end{proof}

\section{Applications to other Nahm sums}\label{sec-application}
A number of Nahm sums in the literature can be expressed as the infinite products  $g_{j,k}(\tau)/\eta(\tau)$.
See \cite[Eqs. (3.2), (3.6)]{WWg2}, \cite[Theorems 3.2, 3.4, 3.12--3.15 and Eqs.\ (3.77)--(3.80)]{WW112}, \cite[Theorems 3.13, 3.14, 3.17, 3.18, 3.25--3.27 and Eqs. (3.169), (3.171)]{WW122}, \cite[Theorems 3.4 and 3.6]{Wang2022}  and \cite[Eqs.\ (4.39), (4.40), (4.74), (4.76), Theorems 4.6, 4.11]{W-rank3} for example. Using Corollaries \ref{cor-AG} and \ref{cor-B}, we can deduce transformation formulas for these Nahm sums for free.

Meanwhile, there are some Nahm sums which can be expressed as $F(\tau)g_{j,k}(\tau)$ where $F(\tau)$ is some eta quotients, and we can obtain their transformation formulas combining the properties of $F(\tau)$ and Theorems \ref{thm-G-o} and \ref{thm-G-e}. In the examples below, the transformation properties of $F(\tau)$ can be obtained using  properties of the Weber functions and the Dedekind eta function listed in Section \ref{sec-pre}. Since the proofs are similar to Corollaries \ref{cor-AG} and \ref{cor-B}, we omit the details.
\begin{exam}
Recall the Nahm sums fom \cite[Theorem 4.1]{WW112}: For $k\geq 2, 1\leq i\leq k+1$, we have
\begin{align}
&x_{2k+3,i}(q):=\sum_{n_1,\dots,n_k\geq 0}\frac{q^{N_{1}^{2}+\cdots+N_{k}^{2}+2N_i+2N_{i+2}+2N_{i+4}+\cdots}}{(q^2;q^2)_{n_1}\cdots(q^2;q^2)_{n_{k-1}}(q^4;q^4)_{n_{k}}} \nonumber \\
&= \frac{(-q;q^2)_\infty (q^i,q^{2k+3-i},q^{2k+3};q^{2k+3})_\infty}{(q^2;q^2)_\infty},  \label{AG-ge-2k+3}
\end{align}
where $N_i=n_i+n_{i+1}+\cdots+n_k$ for $1\leq i\leq k$ and $N_{k+1}=0$. In addition, for $k=1$ we define $x_{5,i}$ ($i=1,2$) as the infinite product on the right side. Let
\begin{align}\label{exam1-X-defn}
X_{2k+3}(\tau) &:= \Big(q^{\frac{1}{8(2k+3)}-\frac{1}{8}}x_{2k+3,k+1}(q), \dots, q^{\frac{(2k+1)^2}{8(2k+3)}-\frac{1}{8}}x_{2k+3,1}(q)\Big)^{\mathrm{T}} \nonumber \\
&=\frac{\mathfrak{f}(2\tau)}{\eta(2\tau)}G_{2k+3}^{(1)}\Big(\frac{\tau}{2}\Big).
\end{align}
We have
\begin{align}
X_{2k+3}(\tau+1) &= \diag (e^{(\frac{1}{4(2k+3)}-\frac{1}{4})\pi i}, \dots, e^{(\frac{(2k+1)^2}{4(2k+3)}-\frac{1}{4})\pi i})X_{2k+3}(\tau), \label{exam1-X-T} \\
X_{2k+3}\Big(\frac{\tau}{4\tau+1}\Big) &= \frac{4}{2k+3} e^{\frac{\pi i}{4}}A_{2k+3}\Lambda_{2k+3}^{-1}A_{2k+3}X_{2k+3}(\tau). \label{exam1-X-S}
\end{align}
Here $A_{2k+3}, \Lambda_{2k+3}$ are defined in Theorem \ref{thm-G-o}.
As a consequence, $X_{2k+3}(\tau)$ is a vector-valued modular function on $\Gamma_0(4)$.
\end{exam}

Several Nahm sums in the literature (see e.g.\ \cite[Eqs. (3.25)--(3.27)]{WWg2}, \cite[Theorems 3.16--3.18]{WW112} and \cite[Theorems 3.11--3.12]{WW122}) can be expressed as the infinite products in \eqref{AG-ge-2k+3}.

\begin{exam}
Recall the generalized Nahm sums from  \cite[Theorem 4.5]{WW112}:
\begin{align}
&x_{2k+3,i}(q) := \sum_{m_1,m_2,n_1,\dots,n_k\geq 0}\frac{q^{\tbinom{m_1+1}{2} +m_1m_2 + 2\tbinom{m_2+1}{2} +2m_2n_1+4N_1^2+\cdots+4N_k^2+4N_i+\cdots+4N_k}}{(q;q)_{m_1}(q;q)_{m_2}(q^2;q^2)_{n_1}(q^4;q^4)_{n_2}\cdots (q^4;q^4)_{n_k}}  \nonumber \\
& = \frac{(q^{4i},q^{8k+12-4i},q^{8k+12};q^{8k+12})_\infty}{(q;q)_\infty}, \quad k\geq 1, 1\leq i\leq k+1,
\end{align}
where $N_i=n_i+n_{i+1}+\cdots+n_k$ for $1\leq i\leq k$ and $N_{k+1}=0$.
Let
\begin{align}
X_{2k+3}(\tau) & := \Big( q^{\frac{1}{2(2k+3)}-\frac{1}{24}}x_{2k+3,k+1}(q), \dots, q^{\frac{(2k+1)^2}{2(2k+3)}-\frac{1}{24}}x_{2k+3,1}(q) \Big)^{\mathrm{T}} \nonumber \\
& = \frac{1}{\eta(\tau)} G_{2k+3}^{(1)}(2\tau).
\end{align}
We have
\begin{align}
X_{2k+3}(\tau+1) = \diag (e^{(\frac{1}{2k+3}-\frac{1}{12})\pi i}, \dots, e^{(\frac{(2k+1)^2}{2k+3}-\frac{1}{12})\pi i})X_{2k+3}(\tau),  \\
X_{2k+3}\Big(\frac{\tau}{4\tau+1}\Big) = \frac{4}{2k+3}e^{\frac{\pi i}{3}}A_{2k+3}\widehat{\Lambda}_{2k+3}A_{2k+3}X_{2k+3}(\tau).
\end{align}
Here $\widehat{\Lambda}_{2k+3} = \diag (e^{-\frac{1}{4(2k+3)}\pi i}, e^{-\frac{9}{4(2k+3)}\pi i}, \dots, e^{-\frac{(2k+1)^2}{4(2k+3)}\pi i})$ and $A_{2k+3}$ is given in Theorem \ref{thm-G-o}.
As a consequence, $X_{2k+3}(\tau)$ is a vector-valued modular function on $\Gamma_0(4)$.

In particular, the case $k=1$ yields modular transformation formulas for the Nahm sums in \cite[Theorems 3.1 and 3.3]{WW112}.
\end{exam}

\begin{exam}
Recall the following Nahm sums in \cite[Theorem 4.7]{W-rank3}:
\begin{align}
x_1(q) := &  \sum_{i,j,k\geq 0}\frac{q^{3i^2 +2j^2+k^2 +4ij+2ik+2jk}}{(q^2;q^2)_i (q^2;q^2)_j (q^2;q^2)_k} = \frac{(-q;q^2)_\infty}{(q^2,q^8;q^{10})_\infty}, \label{x1-111-mod5}  \\
x_2(q) := &  \sum_{i,j,k\geq 0}\frac{q^{3i^2 +2j^2+k^2 +4ij+2ik+2jk+2i+2j}}{(q^2;q^2)_i (q^2;q^2)_j (q^2;q^2)_k} = \frac{(-q;q^2)_\infty}{(q^4,q^6;q^{10})_\infty},  \label{x3-111-mod5} \\
x_3(q) := &  \sum_{i,j,k\geq 0}\frac{q^{3i^2 +2j^2+k^2 +4ij+2ik+2jk+i+k}}{(q^2;q^2)_i (q^2;q^2)_j (q^2;q^2)_k} = \frac{(-q^2;q^2)_\infty}{(q^2,q^8;q^{10})_\infty}, \label{x2-111-mod5} \\
x_4(q) := & \sum_{i,j,k\geq 0}\frac{q^{3i^2 +2j^2+k^2 +4ij+2ik+2jk+3i+2j+k}}{(q^2;q^2)_i (q^2;q^2)_j (q^2;q^2)_k} = \frac{(-q^2;q^2)_\infty}{(q^4,q^6;q^{10})_\infty}. \label{x4-111-mod5}
\end{align}
Let
\begin{align}
&X(\tau) := (q^{-3/40}x_1(q), q^{13/40}x_2(q), q^{1/20}x_3(q), q^{9/20}x_4(q))^{\mathrm{T}} \nonumber \\
&=\frac{1}{\eta(2\tau)}(\mathfrak{f}(2\tau)g_{1,5}(\tau),
\mathfrak{f}(2\tau)g_{3,5}(\tau), \mathfrak{f}_2(2\tau)
g_{1,5}(\tau),\mathfrak{f}_2(2\tau)
g_{3,5}(\tau))^\mathrm{T}.
\end{align}
We have
\begin{align}
X(\tau+1)& = \diag{(\zeta_{40}^{-3}, \zeta_{40}^{13}, \zeta_{20}, \zeta_{20}^{9})}X(\tau),  \label{B-1} \\
X\left(\frac{\tau}{2\tau+1}\right)& = \frac{4}{5} M P N X(\tau). \label{X-111-mod5-S}
\end{align}
Here $P = \diag{(\zeta_{80}^{3}, \zeta_{80}^{-13}, \zeta_{80}^{3}, \zeta_{80}^{-13})}$,
\begin{align}
M =
\begin{pmatrix}
\sqrt{2}A_5  & O \\
O & \frac{1}{\sqrt{2}}A_5
\end{pmatrix},  \quad
N = \begin{pmatrix}
O & A_5  \\
A_5 & O
\end{pmatrix},
\end{align}
where $A_5$ is given in Theorem \ref{thm-G-o}. Hence $X(\tau)$ is a vector-valued modular function on $\Gamma_0(2)$.

The right sides of \eqref{x2-111-mod5} and \eqref{x4-111-mod5} are also the product expressions of those Nahm sums in \cite[Theorem 4.15]{W-rank3}, \cite[Theorem 3.10]{WWg2} and \cite[Theorem 3.19]{WW112} with $q$ replaced by $q^2$.
\end{exam}

\begin{exam}
Recall the Nahm sums in the following identities \cite[Theorem 3.22]{WW122}
\begin{align}
&x_1(q) := \sum_{i,j,k\geq 0}\frac{q^{\frac{1}{2}i^2+5j^2+4k^2+2ij+2ik+8jk+\frac{1}{2}i}}{(q;q)_i (q^2;q^2)_j (q^2;q^2)_k} = \frac{(-q;q)_\infty}{(q^4,q^{16};q^{20})_\infty}, \label{x1-mod20-defn} \\
&x_2(q) := \sum_{i,j,k\geq 0}\frac{q^{\frac{1}{2}i^2+5j^2+4k^2+2ij+2ik+8jk+\frac{1}{2}i+4j+4k}}{(q;q)_i (q^2;q^2)_j (q^2;q^2)_k} = \frac{(-q;q)_\infty}{(q^8,q^{12};q^{20})_\infty}. \label{x2-mod20-defn}
\end{align}
Let
\begin{equation}
X(\tau) := (q^{-1/40}x_1(q), q^{31/40}x_2(q))^{\mathrm{T}}=\frac{\mathfrak{f}_2(\tau)}{\eta(4\tau)} G_{5}^{(1)}(2\tau).
\end{equation}
We have
\begin{align}
X(\tau+1) &= \diag{(\zeta_{40}^{-1}, \zeta_{40}^{31})}X(\tau), \label{X-122-mod20-T}\\
X\left(\frac{\tau}{4\tau+1}\right) &= \frac{4}{5}\begin{pmatrix}
\sin \frac{2\pi}{5} & \sin \frac{\pi}{5} \\
\sin \frac{\pi}{5} & -\sin \frac{2\pi}{5}
\end{pmatrix}
\begin{pmatrix}
\zeta_{10} & 0 \\
0 & \zeta_{10}^{-1}
\end{pmatrix}
\begin{pmatrix}
\sin \frac{2\pi}{5} & \sin \frac{\pi}{5} \\
\sin \frac{\pi}{5} & -\sin \frac{2\pi}{5}
\end{pmatrix}
X(\tau). \label{X-122-mod20-S}
\end{align}
Hence $X(\tau)$ is a vector-valued modular function on $\Gamma_0(4)$.
\end{exam}

\begin{exam}
Recall the following Nahm sums in \cite[Theorem 4.12]{W-rank3}:
\begin{align}
&x_1(q) := \sum_{i,j,k\geq 0}\frac{q^{2i^2+j^2+k^2+2ij+2ik+jk}}{(q;q)_i (q;q)_j (q;q)_k} = \frac{1}{(q,q^4;q^5)_\infty^{2}}, \label{x1-defn}\\
&x_2(q) := \sum_{i,j,k\geq 0}\frac{q^{2i^2+j^2+k^2+2ij+2ik+jk+2i+j+k}}{(q;q)_i (q;q)_j (q;q)_k} = \frac{1}{(q^2,q^3;q^5)_\infty^{2}}, \label{x2-defn}  \\
&x_3(q) := \sum_{i,j,k\geq 0}\frac{q^{2i^2+j^2+k^2+2ij+2ik+jk+i+j}}{(q;q)_i (q;q)_j (q;q)_k} = \frac{(q^5;q^5)_\infty}{(q;q)_\infty}. \label{x3-defn}
\end{align}
Let
\begin{align}
    &X(\tau) := (q^{-1/30}x_1(q), q^{11/30}x_2(q), q^{1/6}x_3(q))^\mathrm{T} \nonumber \\
    &=\frac{1}{\eta^{2}(\tau)}({g_{\frac{1}{2},\frac{5}{2}}^{2}(\tau)},  g_{\frac{3}{2},\frac{5}{2}}^{2}(\tau), g_{\frac{1}{2},\frac{5}{2}}(\tau) g_{\frac{3}{2},\frac{5}{2}}(\tau))^\mathrm{T}.
\end{align}
We have
\begin{align}
X(\tau+1) &= \diag (\zeta_{30}^{-1}, \zeta_{30}^{11}, \zeta_{6})X(\tau), \label{X-mod5-T} \\
X\left(-\frac{1}{\tau}\right) &= \frac{4}{5}
\begin{pmatrix}
\sin^{2} \frac{2\pi}{5} & \sin^{2} \frac{\pi}{5} & 2\sin \frac{\pi}{5} \sin \frac{2\pi}{5}  \\
\sin^{2} \frac{\pi}{5} & \sin^{2} \frac{2\pi}{5} & -2\sin \frac{\pi}{5} \sin \frac{2\pi}{5} \\
\sin \frac{\pi}{5} \sin \frac{2\pi}{5} & -\sin \frac{\pi}{5} \sin \frac{2\pi}{5} & \sin^{2} \frac{\pi}{5}-\sin^{2} \frac{2\pi}{5}
\end{pmatrix}
X(\tau). \label{X-mod5-S}
\end{align}
As a consequence, $X(\tau)$ is a vector-valued modular function on $\mathrm{SL}(2,\mathbb{Z})$.
\end{exam}

\begin{exam}
Bressoud \cite[Eq. (3.8)]{Bressoud1980} proved the following identity:
\begin{align}
&x_{2k,i} (q) := \sum_{n_1\geq n_2\geq \cdots \geq n_{k-1}\geq 0}\frac{(-q^{1-2n_1};q^2)_{n_1}q^{2(n_{1}^2+\cdots +n_{k-1}^{2}+n_i+\cdots +n_{k-1})}}{(q^2;q^2)_{n_1-n_2}(q^2;q^2)_{n_2-n_3}\cdots (q^2;q^2)_{n_{k-1}}}  \nonumber \\
& = \frac{(q^2;q^4)_\infty(q^{2i-1},q^{4k-2i+1},q^{4k};q^{4k})_\infty}{(q;q)_\infty},  \quad 1\leq i\leq k.  \label{B-ori}
\end{align}
Though the left side is no longer a Nahm sum, the product side does express some Nahm sums. For instance, the Nahm sums in \cite[Eqs. (3.5), (3.8)]{Wang2022}, \cite[Eqs. (3.39), (3.43), (3.52), (3.55)]{WW112} and \cite[Eqs. (3.208), (3.211)]{WW122} and \cite[Eqs. (4.57), (4.58)]{W-rank3} all have such product expressions.

Let
\begin{align}
X_{2k}(\tau) &:= \Big( q^{\frac{1}{8k}-\frac{1}{8}}x_{2k,k}(q), q^{\frac{9}{8k}-\frac{1}{8}}x_{2k,k-1}(q), \dots, q^{\frac{(2k-1)^2}{8k}-\frac{1}{8}}x_{2k,1}(q)\Big)^{\mathrm{T}}\nonumber \\
&=\frac{\eta(2\tau)}{\eta(\tau)\eta(4\tau)}G_{2k}^{(1)}(\tau). \label{B-4k}
\end{align}
Applying Theorem \ref{thm-G-o} to the right side of \eqref{B-4k}, we have the following:
\begin{align}
X_{2k}(\tau+1) & = \diag \big( e^{(\frac{1}{4k}-\frac{1}{4})\pi i}, \dots, e^{(\frac{(2k-1)^2}{4k}-\frac{1}{4})\pi i} \big)X_{2k}(\tau), \\
 X_{2k}\big(-\frac{1}{4\tau}\big) &= \sqrt{\frac{2}{k}} A_{2k} X_{2k}(\tau),
\end{align}
where $A_{2k}$ is the same as in Theorem \ref{thm-G-o}.
As a consequence, $X_{2k}(\tau)$ is a vector-valued modular function on $\Gamma_0(4)$.
\end{exam}

\begin{exam}
He--Ji--Wang--Zhao \cite{HJWZ} obtained the following overpartition analogue of Bressoud’s identity \eqref{B-ori}: For  $1\leq i\leq k$ we have
\begin{align}
 &x_{2k-1,i}(q):= \sum_{n_1\geq n_2\geq \cdots \geq n_{k-1}\geq 0}\frac{(-q^{2-2n_1};q^2)_{n_1-1} (-q^{1-2n_1};q^2)_{n_1}(1+q^{2n_{i}})}{(q^2;q^2)_{n_1-n_2}(q^2;q^2)_{n_2-n_3}\cdots (q^2;q^2)_{n_{k-1}}}  \nonumber \\
& \qquad \times q^{2(n_{1}^2+\cdots +n_{k-1}^{2}+n_{i+1}+\cdots +n_{k-1})} = \frac{(-q;q)_\infty(q^{2i-1},q^{4k-1-2i},q^{4k-2};q^{4k-2})_\infty}{(q;q)_\infty}.
\end{align}
Though the sum side is not a Nahm sum, the product side with special $k$ does express some Nahm sums. For instance, the case $k=2$ express Nahm sums in \cite[Eqs. (4.55) and (4.56)]{W-rank3}.

Let
\begin{align}
X_{2k-1}(\tau) &:= \Big( x_{2k-1,k}(q), q^{\frac{1}{2k-1}}x_{2k-1,k-1}(q), \dots, q^{\frac{(k-1)^2}{2k-1}}x_{2k-1,1}(q)  \Big)^{\mathrm{T}} \nonumber \\
&=\frac{\eta(2\tau)}{\eta^2(\tau)}G_{2k-1}^{(0)}(\tau).
\end{align}
We have
\begin{align}
X_{2k-1}(\tau+1) = \diag (1, e^{\frac{2}{2k-1}\pi i}, \dots, e^{\frac{(2k-2)^2}{4k-2}\pi i})X_{2k-1}(\tau),  \\
X_{2k-1}\Big(\frac{\tau}{4\tau+1}\Big) = \frac{2}{2k-1}e^{\frac{\pi i}{2}} B_{2k-1}\widetilde{\Lambda}_{2k-1}C_{2k-1}X_{2k-1}(\tau),
\end{align}
where $B_{2k-1}, \widetilde{\Lambda}_{2k-1}, C_{2k-1}$ are defined in Theorem \ref{thm-G-e}. As a consequence, $X_{2k-1}(\tau)$ is a vector-valued modular function on $\Gamma_0(4)$.
\end{exam}

\begin{exam} We  recall the following two Nahm sums \cite[Eqs. (3.19), (3.22)]{Wang2022}:
\begin{align}
x_1(q):= & \sum_{i,j\geq 0}\frac{q^{i^2 -2ij +2j^2 }}{(q^2 ;q^2)_i (q^2;q^2)_j}= \frac{(q^2;q^2)_{\infty}^3(q^3,q^5,q^8;q^8)_{\infty}}{(q;q)_{\infty}^2(q^4;q^4)_{\infty}^2}, \label{x1-22}  \\
x_2(q):= & \sum_{i,j\geq 0}\frac{q^{i^2 -2ij +2j^2 +2j }}{(q^2 ;q^2)_i (q^2;q^2)_j}= \frac{(q^2;q^2)_{\infty}^3(q,q^7,q^8;q^8)_{\infty}}{(q;q)_{\infty}^2(q^4;q^4)_{\infty}^2}.  \label{x2-22}
\end{align}
Let
\begin{equation}
X(\tau):=(q^{-5/48}x_1(q),q^{19/48}x_2(q))^\mathrm{T}=\frac{\eta^3(2\tau)}{\eta^2(\tau)\eta^2(4\tau)}G_4^{(1)}(\tau).
\end{equation}
We have
\begin{align}
&X(\tau +1)=\diag(\zeta_{48}^{-5}, \zeta_{48}^{19})X(\tau), \label{eg3-1} \\
&X\Big(-\frac{1}{4\tau}\Big) =
\begin{pmatrix}
\sin \frac{3\pi}{8} & \sin \frac{\pi}{8} \\
\sin{\frac{\pi}{8}} & -\sin \frac{3\pi}{8}
\end{pmatrix}
X(\tau) \label{eg3-2} .
\end{align}
As a consequence, $X(\tau)$ is a vector-valued modular function on $\Gamma_0(4)$.
\end{exam}

\begin{exam}
Recall the following Nahm sums from \cite[Eqs. (3.10), (3.14)]{WWg2}:
\begin{align}
x_1(q) := \sum_{i,j\geq 0}\frac{q^{i^2-2ij+2j^2}}{(q^2;q^2)_i (q^4;q^4)_j} = \frac{(q^2;q^2)_\infty^3 (q^3;q^3)_\infty^2}{(q;q)_\infty^2(q^4;q^4)_\infty^2(q^6;q^6)_\infty}, \label{x1-24} \\
x_2(q) := \sum_{i,j\geq 0}\frac{q^{i^2-2ij+2j^2+2j}}{(q^2;q^2)_i (q^4;q^4)_j} = \frac{(q^2;q^2)_\infty^2(q^6;q^6)_\infty^2}{(q;q)_\infty(q^3;q^3)_\infty(q^4;q^4)_\infty^2}.   \label{x2-24}
\end{align}
Let
\begin{equation}
X(\tau) := (q^{-1/6}x_1(q), q^{1/6}x_2(q))^{\mathrm{T}}=\frac{\eta^3(2\tau)}{\eta^2(\tau)\eta^2(4\tau)}G_3^{(0)}(\tau).
\end{equation}
We have
\begin{align}
X(\tau+1) &= \diag (\zeta_{6}^{-1}, \zeta_{6})X(\tau),  \label{X-24-T} \\
X \left(\frac{\tau}{4\tau+1}\right) &=-\frac{\sqrt{3}}{3}\zeta_{12}^{-7}\begin{pmatrix}
    i & 2 \zeta_{12} \\ \zeta_{12} & -\zeta_{12}^{-1}
\end{pmatrix}X(\tau). \label{X-24-S}
\end{align}
As a consequence, $X(\tau)$ is a vector-valued modular function on $\Gamma_0(4)$.
\end{exam}

\begin{exam}
Recall the Nahm sums from \cite[Theorem 4.5]{W-rank3}:
\begin{align}
&x_1(q) := \sum_{i,j,k\geq 0}\frac{q^{2i^2+j^2+k^2+2ij-2ik+k}}{(q^2;q^2)_i (q^2;q^2)_j (q^2;q^2)_k} = \frac{(q^3;q^3)_\infty^2(q^4;q^4)_\infty}{(q;q)_\infty(q^2;q^2)_\infty(q^6;q^6)_\infty}, \label{x1-222}  \\
&x_2(q) :=\frac{1}{2} \sum_{i,j,k\geq 0}\frac{q^{2i^2+j^2+k^2+2ij-2ik+2i+2j-k}}{(q^2;q^2)_i (q^2;q^2)_j (q^2;q^2)_k} = \frac{(q^4;q^4)_\infty(q^6;q^6)_\infty^2}{(q^2;q^2)_\infty^2(q^3;q^3)_\infty}, \label{x2-222} \\
&x_3(q) := \sum_{i,j,k\geq 0}\frac{q^{2i^2+j^2+k^2+2ij-2ik+j}}{(q^2;q^2)_i (q^2;q^2)_j (q^2;q^2)_k} = \frac{(q^2;q^2)_\infty^2(q^3;q^3)_\infty^2}{(q;q)_\infty^2(q^4;q^4)_\infty(q^6;q^6)_\infty},  \label{x3-222} \\
&x_4(q) := \sum_{i,j,k\geq 0}\frac{q^{2i^2+j^2+k^2+2ij-2ik+2i+j}}{(q^2;q^2)_i (q^2;q^2)_j (q^2;q^2)_k} = \frac{(q^2;q^2)_\infty(q^6;q^6)_\infty^2}{(q;q)_\infty(q^3;q^3)_\infty(q^4;q^4)_\infty}. \label{x4-222}
\end{align}
Let
\begin{align}
X_1(\tau) := (q^{1/24}x_1(q), q^{3/8}x_2(q))^{\mathrm{T}} = \frac{\eta(4\tau)}{\eta(\tau)\eta(2\tau)}G_{3}^{(0)}(\tau), \\
X_2(\tau) := (q^{-1/12}x_3(q), q^{1/4}x_4(q))^{\mathrm{T}} = \frac{\eta^{2}(2\tau)}{\eta^{2}(\tau)\eta(4\tau)}G_{3}^{(0)}(\tau).
\end{align}
We have
\begin{align}
 X_1(\tau+1) &= \diag (\zeta_{24}, \zeta_{8}^{3})X_1(\tau),  \\
 X_2(\tau+1) &= \diag (\zeta_{12}^{-1}, \zeta_{4})X_2(\tau),  \\
X_1\left(\frac{\tau}{4\tau +1}\right) & = \frac{1}{3}
\begin{pmatrix}
2\zeta_8+\zeta_{24}^{11} & 2\zeta_8-2\zeta_{24}^{11} \\
\zeta_8-\zeta_{24}^{11} & \zeta_8+2\zeta_{24}^{11}
\end{pmatrix}
X_1(\tau), \\
X_2\left(\frac{\tau}{4\tau +1}\right) & = \frac{1}{3}
\begin{pmatrix}
2\zeta_8+\zeta_{24}^{11} & 2\zeta_8-2\zeta_{24}^{11} \\
\zeta_8-\zeta_{24}^{11} & \zeta_8+2\zeta_{24}^{11}
\end{pmatrix}
X_2(\tau).
\end{align}
As a consequence, $X_i(\tau)$ ($i=1,2$) is a vector-valued modular function on $\Gamma_0(4)$.
\end{exam}

\begin{exam}
Recall the following Nahm sums from \cite[Theorem 4.14]{W-rank3}:
\begin{align}
x_1(q) := \sum_{i,j,k\geq 0}\frac{q^{4i^2+2j^2+k^2+4ij-2ik-2jk}}{(q^2;q^2)_i (q^2;q^2)_j (q^2;q^2)_k} = \frac{(q^2;q^2)_\infty^{3} (q^5,q^7,q^{12};q^{12})_\infty}{(q;q)_\infty^{2} (q^4;q^4)_\infty^{2}}, \label{x1-mod12-222} \\
x_2(q) := \sum_{i,j,k\geq 0}\frac{q^{4i^2+2j^2+k^2+4ij-2ik-2jk+2i}}{(q^2;q^2)_i (q^2;q^2)_j (q^2;q^2)_k} = \frac{(q^2;q^2)_\infty^{3} (q^3,q^9,q^{12};q^{12})_\infty}{(q;q)_\infty^{2} (q^4;q^4)_\infty^{2}}, \label{x2-mod12-222}  \\
x_3(q) := \sum_{i,j,k\geq 0}\frac{q^{4i^2+2j^2+k^2+4ij-2ik-2jk+4i+2j}}{(q^2;q^2)_i (q^2;q^2)_j (q^2;q^2)_k} = \frac{(q^2;q^2)_\infty^{3} (q,q^{11},q^{12};q^{12})_\infty}{(q;q)_\infty^{2} (q^4;q^4)_\infty^{2}}. \label{x3-mod12-222}
\end{align}
Let
$$X(\tau) := (q^{-1/8}x_1(q), q^{5/24}x_2(q), q^{7/8}x_3(q))^{\mathrm{T}}=\frac{\eta^3(2\tau)}{\eta^2(\tau)\eta^2(4\tau)}G_6^{(1)}(\tau).$$
We have
\begin{equation}
X(\tau +1) = \diag (\zeta_{8}^{-1}, \zeta_{24}^{5}, \zeta_{8}^{7})X(\tau)
\end{equation}
and
\begin{equation}
X\left(-\frac{1}{4\tau}\right) = \sqrt{\frac{2}{3}}
\begin{pmatrix}
\sin \frac{5\pi}{12} & \sin \frac{\pi}{4} & \sin \frac{\pi}{12}  \\
\sin \frac{\pi}{4} & -\sin \frac{\pi}{4} & -\sin \frac{\pi}{4}  \\
\sin \frac{\pi}{12} & -\sin \frac{\pi}{4} & \sin \frac{5\pi}{12}
\end{pmatrix}
X(\tau).
\end{equation}
As a consequence, $X(\tau)$ is a vector-valued modular function on $\Gamma_0(4)$.
\end{exam}

\begin{exam}
Recall the Nahm sums from \cite[Theorem 3.10]{WW112}:
\begin{align}
x_1(q) := \sum_{i,j,k\geq 0}\frac{q^{\frac{1}{2}i^2+2j^2+k^2+ij+2jk+\frac{1}{2}i}}{(q;q)_i (q;q)_j (q^2;q^2)_k} = \frac{(-q;q)_\infty}{(q,q^4,q^7;q^8)_\infty},  \label{x1-mod8-112} \\
x_2(q) := \sum_{i,j,k\geq 0}\frac{q^{\frac{1}{2}i^2+2j^2+k^2+ij+2jk+\frac{1}{2}i+2j+2k}}{(q;q)_i (q;q)_j (q^2;q^2)_k} = \frac{(-q;q)_\infty}{(q^3,q^4,q^5;q^8)_\infty}.  \label{x2-mod8-112}
\end{align}
Let
\begin{equation}
X(\tau) := (q^{-1/48}x_1(q), q^{23/48}x_2(q))^{\mathrm{T}} = \frac{\eta^2(2\tau)}{\eta^2(\tau)\eta(4\tau)}G_{4}^{(1)}(\tau).
\end{equation}
We have
\begin{align}
X(\tau+1) &= \diag (\zeta_{48}^{-1}, \zeta_{48}^{23})X(\tau), \\
X \left(\frac{\tau}{4\tau+1}\right)
&=\frac{\sqrt{2}}{2}\zeta_{48}^7\begin{pmatrix}
    1 & 1 \\ 1 & -1
\end{pmatrix}X(\tau).
\end{align}
As a consequence, $X(\tau)$ is a vector-valued modular function on $\Gamma_0(4)$.
\end{exam}

The above examples show that a number of Nahm sums form vector-valued modualr functions on $\Gamma_0(N)$ with $N\in \{1,2,3,4\}$. However, this might not always be true as can be seen from the following example.
\begin{exam}
Recall the identities in \cite[Theorem 3.9]{WWg2}:
\begin{align}
x_1(q) := \sum_{i,j\geq 0}\frac{q^{2i^2-4ij+3j^2-2i+2j}}{(q^4;q^4)_i (q^8;q^8)_j} = \frac{(-1;q^4)_\infty (q^2,q^3,q^5;q^5)_\infty}{(q,q^3,q^4;q^4)_\infty}, \label{x1-48} \\
x_2(q) := \sum_{i,j\geq 0}\frac{q^{2i^2-4ij+3j^2-2i+4j}}{(q^4;q^4)_i (q^8;q^8)_j} = \frac{(-1;q^4)_\infty (q,q^4,q^5;q^5)_\infty}{(q,q^3,q^4;q^4)_\infty}. \label{x2-48}
\end{align}
Let
\begin{equation}
X(\tau) := (q^{1/15}x_1(q), q^{4/15}x_2(q))^{\mathrm{T}}=2\frac{\mathfrak{f}_2(4\tau)\eta(2\tau)}{\eta(\tau)\eta(4\tau)} G_{5}^{(1)}\Big(\frac{\tau}{2}\Big).
\end{equation}
We have
\begin{align}
 X(\tau+1) &= \diag (\zeta_{15}, \zeta_{15}^{4})X(\tau), \label{X-48-mod5-T} \\
 X \left(\frac{\tau}{8\tau+1}\right) &=  \frac{4}{5} A_5 P_1 A_5 X(\tau).  \label{X-48-mod5-S}
\end{align}
Here $A_5$ is defined in Theorem \ref{thm-G-o} and $P_1 = \diag (\zeta_{120}^{11}, \zeta_{120}^{-181})$.
As a consequence, $X(\tau)$ is modular on a  subgroup of $\Gamma_0(4)$ generated by the matrices $\pm \left(\begin{smallmatrix}
    1 & 0 \\ 0 & 1
\end{smallmatrix}\right)$, $ \left(\begin{smallmatrix}
    1 & 1 \\ 0 & 1
\end{smallmatrix}\right)$ and $\left(\begin{smallmatrix}
    1 & 0 \\ 8 & 1
\end{smallmatrix}\right)$. However,
\begin{equation}
X \left(\frac{\tau}{4\tau+1}\right) = \frac{4\sqrt{2}}{5} A_5 P_2 A_5
\frac{\mathfrak{f}(4\tau)\eta(2\tau)}{\eta(\tau)\eta(4\tau)} G_{5}^{(1)}\big(\frac{\tau}{2}\big)
\end{equation}
where $P_2 = \diag (\zeta_{240}^{11}, \zeta_{240}^{-181})$.
This shows that $X(\tau)$ is not modular on $\Gamma_0(4)$. We may need to find more modular Nahm sums associated with the same matrix to enlarge the group.
\end{exam}

\end{document}